\documentclass[a4paper,10pt]{amsart}
\usepackage[T1]{fontenc}
\usepackage[english]{babel}
\usepackage{amsthm, amssymb, amsfonts}
\usepackage{dsfont}
\usepackage{mathtools}
\usepackage{mathrsfs}
\usepackage[margin=1in]{geometry}
\usepackage[toc,page]{appendix}

\usepackage{biblatex}
\bibliography{references}
\usepackage{csquotes}

\usepackage{esint}

\theoremstyle{plain}
\newtheorem{theorem}{Theorem}[subsection]
\newtheorem{proposition}[theorem]{Proposition}

\newtheorem{lemma}[theorem]{Lemma}
\newtheorem{remark}[theorem]{Remark}
\theoremstyle{remark}

\theoremstyle{definition}

\DeclarePairedDelimiter{\innn}{\{}{\}}
\DeclarePairedDelimiter{\inn}{\langle}{\rangle}
\DeclarePairedDelimiter{\abs}{\lvert}{\rvert}
\DeclarePairedDelimiter{\norm}{\lVert}{\rVert}
\DeclarePairedDelimiter{\seq}{(}{)}
\newcommand{\R}{\mathbb{R}}

\newcommand{\Lip}{\mathrm{Lip}}
\newcommand{\diva}{\mathrm{div}}

\newcommand{\tr}{\mathrm{Tr}}

\newcommand{\curl}{\mathrm{curl}}

\newcommand{\molN}{\mathfrak{I}^N}
\newcommand{\mol}{\mathfrak{I}_\epsilon}
\newcommand{\hel}{\mathfrak{H}}
\newcommand{\calf}{\mathsf{f}}
\newcommand{\calX}{\mathsf{X}}
\newcommand{\calA}{\mathsf{A}}
\newcommand{\calb}{\mathsf{b}}
\newcommand{\calB}{\mathsf{B}}
\newcommand{\calF}{\mathsf{F}}
\newcommand{\calQ}{\mathsf{Q}}

\newcommand{\cof}{\mathrm{Cof}}

\newcommand{\Om}{\Omega}

\newcommand{\T}{\mathbb{T}}
\newcommand{\Z}{\mathbb{Z}}
\newcommand{\N}{\mathbb{N}}

\newcommand{\I}{\mathbb{I}}

\numberwithin{equation}{section}

\title[Local existence of smooth solutions for the semigeostrophic equations]{Local existence of smooth solutions for the semigeostrophic equations on curved domains}
\author{Lauro Silini}
\begin{document}
\maketitle
\bigskip

\begin{abstract}
We prove local-in-time existence of smooth solutions to the semigeostrophic equations in the general setting of smooth, bounded and simply connected domains of $\R^2$ endowed with an arbitrary conformally flat metric and non-vanishing Coriolis term. We present a construction taking place in Eulerian coordinates, avoiding the classical reformulation in dual variables, used in the flat case with constant Coriolis force, but lacking in this general framework.
\end{abstract}
\bigskip
\section{Introduction}
\subsection{Background and motivations}
In meteorology, the semigeostrophic equations are believed to constitute a good approximation of the atmospheric flow dynamics in the large scale setting (see for instance Cullen \cite{C06} and Hoskins \cite{H75}). The form of these equations in the flat case is
\begin{equation}\label{eq:SG_intro}
\begin{cases}
(\partial_t+u\cdot\nabla)u_G+f(u-u_G)^\perp=0,\\
u_G=\frac{1}{f}\nabla^\perp p,\\
\diva(u)=0,
\end{cases}
\end{equation}
where $u$, $f$, $p$ and $u_G$ are respectively the velocity of the fluid, the Coriolis term, the pressure and semigeostrophic wind (that is the virtual component of the velocity that takes into account the action of the large Coriolis effect against the material derivative).
The local-in-time existence of smooth solutions has been obtained by Loeper (see \cite{L06}). The global-in-time existence of weak solution has been proved by Ambrosio, Colombo, De Philippis and Figalli in \cite{ACPF12}. Those two achievements suppose the Coriolis term constant and the domain flat, since in this case it is possible to formulate a dual version of the semigeostrophic equation which is formed by a continuity equation coupled with a Monge-Ampère equation
\[
\begin{cases}
\partial_t\rho_t+\diva(\rho_t U_t)=0,\\
U_t(y)=(y-\nabla P^*(y,t))^\perp,\\
\det(D^2P^*(y,t))=\rho_t(y).
\end{cases}
\]
About this reformulation, see Benamou and Brenier \cite{BB71}.
At this stage the classical theory of optimal transport allows the construction of a solution for the dual system. For the global-in-time weak solutions, a careful application of the regularity theory for uniformly bounded Monge-Ampère equations, developed by De Philippis and Figalli in \cite{FP12},  makes the transition to the original system possible.
\bigbreak
The initial motivation of this work is the study of the incompressible semigeostrophic equations over its natural setting: a rotating sphere. At our knowledge, there is no clear generalization of the elegant machinery mentioned before to more physically accurate frameworks, even for the simple periodic case with varying Coriolis term. For this reason, a natural question is whether the passage to dual coordinates can be avoided in order to gain information on the existence in more geometrically interesting domains. This article constitutes a first result in this direction. More precisely, we study the local-in-time existence of smooth solutions. Our method, which was originally customized for the upper (and lower) hemisphere stereographically projected, is robust with respect to the geometry of the underlying domain, and generalizes to any smooth, simply connected, conformally flat domain in $\R^2$. As a remark at the end of the paper, we  will show that the method can be adapted to the non-simply connected model case of the flat torus. For what concerns the Coriolis force, we assume it to be non-vanishing on the closure of the whole domain. Considering that the Coriolis force on a rotating sphere is proportional to the height function, and hence vanishes along the equator, this is a restrictive assumption. However, as we will see heuristically in Section \ref{subsection:derivation} (or simply by looking at the definition of  $u_G$ in \eqref{eq:SG_intro}), there are evident issues of regularity where $f$ vanishes. In fact, it is not clear whether the semigeostrophic equations make any sense at all along the equator. We decided to avoid this delicate problem for now, focusing (in the case of the spherical geometry) on the existence of solutions compactly supported in the upper (or lower) hemisphere. We decided to prescribe the velocity vector field $u$ tangent to the boundary of the domain, which corresponds to the natural condition for mass preservation. The only given data at time zero is the pressure gradient $\nabla p_0$, and we suppose it satisfies a stability condition coming from the derivation of an elliptic PDE for the velocity in terms of the pressure. In the flat setting, this corresponds to the well celebrated \emph{Cullen stability principle} (see \cite[Chapter 3]{C06}), that asks $D^2p_0+\I$ to be convex.
\subsection{Derivation}\label{subsection:derivation}
We start by briefly describing the derivation of the semigeostrophic equation starting from the classical Euler equation governing the evolution of an inviscid fluid on the surface of  a rotating sphere. The generalization to conformally flat domains follows naturally after projecting everything stereographically. Consider the two dimensional sphere rotating on itself with constant velocity $\omega$. The canonical Riemannian metric is given in spherical coordinates $(\theta,\phi)$ by
\[
g=d\theta^2+\sin(\theta)d\phi^2,
\]
where $\theta\in(0,\pi)$ represents the latitude and $\phi\in (0,2\pi)$ the longitude. Denoting with $D^g$, $\diva^g$ and  $\nabla_g$ the Levi-Civita connection, the divergence and the gradient operator  associated  to $g$, and with $(\cdot)^\perp$ the counter-clockwise rotation of $\pi/2$ radians, the  evolution of a two dimensional inviscid fluid on this sphere is described by
\begin{equation}\label{eq:Euler}
\begin{cases}
(\partial_t+D^g_u)u+fu^\perp+\nabla_g p=0,\\
\diva^g(u)=0,
\end{cases}
\end{equation}
where $u$  and $p$ represent respectively the velocity and the pressure of the fluid, and $f$ is the Coriolis term which equal to $f=2\omega\cos(\theta)$. In the large scale setting it is believed that the force induced by the rotation dominates the advection term (at least far form the equator). This induces the definition of a new quantity $u_G$, called the geostrophic wind, which represents the balance
\[
fu_G^\perp+\nabla_g p=0.
\]
The semigeostrophic approximation consists into neglecting the action of the material derivative on the difference $u-u_G$ (called the  ageostrophic component of the wind), but preserving all information about the fluid velocity in the remaining terms of the equation. This means that we are asking
\[
\partial_t u_G+D^g_u u_G+fu^\perp+\nabla_g p=0.
\]
In a more compact form, we finally have the semigeostrophic equation
\begin{equation}\label{eq:primitive_SG}
\begin{cases}
(\partial_t+D^g_u)u_G+f(u-u_G)^\perp=0,\\
u_G=\frac{1}{f}\nabla^\perp_g p,\\
\diva^g(u)=0,\\
\end{cases}
\end{equation}
in its essential formulation (see \cite[Chapter 2]{C06} for the spherical case, and  \cite[Chapter 1]{C06}, \cite{H75} and \cite{L06} for the flat periodic one). Operating a stereographic projection pointed  at the South Pole,  we can see \eqref{eq:primitive_SG} as taking place in the two dimensional plane endowed with the conformal metric and Coriolis term
\begin{equation}\label{eq:spherical_coeff}
g=\frac{4}{(1+\abs{x}^2)^2}\bigl((dx^1)^2+(dx^2)^2\bigr),\quad f=2\omega\frac{1-\abs{x}^2}{1+\abs{x}^2},
\end{equation}
in canonical Cartesian coordinates $(x^1,x^2)$.\\
\par To summarise, we can give the general statement of this problem: let $\Om$ be a sufficiently smooth, bounded and simply connected domain of $\R^2$, and let $V,\varphi$ be two given smooth functions defined on $\bar\Om$. Set \begin{equation}\label{eq:general_form_metric_coriolis}
g:=e^{-2V}\bigl((dx^1)^2+(dx^2)^2\bigr),\quad\text{and  }f=e^{-\varphi},
\end{equation}
and define the endomorphism of tangent bundle
\[
J=(\cdot)^\perp:T\Om\to T\Om,\quad J=-dx^2\otimes\frac{\partial}{\partial x^1}+dx^1\otimes\frac{\partial}{\partial x^2},
\]
to be the counter-clockwise rotation of $\pi/2$-radians. Given an initial pressure gradient $\nabla^g p_0$ we ask ourselves if it is possible to find a local-in-time smooth solution of the  semigeostrophic system
\begin{equation}\label{eq:SG}
\begin{cases}
(\partial_t+D^g_u)\bigl(e^{\varphi}\nabla_g^\perp p\bigr)+e^{-\varphi}\bigl(u-e^{\varphi}\nabla^\perp_g p\bigr)^\perp=0,&\text{ in }\Om,\\
\diva^g(u)=0,&\text{ in }\Om,\\
g(u,\nu)=0,&\text{ on }\partial\Om,\\
\nabla^gp_t\mid_{t=0}=\nabla^gp_0,
\end{cases}
\end{equation}
where $\nu$ denotes the outer pointing normal vector to $\partial\Om$. In particular, when $\varphi$ and $V$ are as in \eqref{eq:spherical_coeff} and \eqref{eq:general_form_metric_coriolis}  we are in the spherical case, and when $V=\varphi=0$, we are in the classic flat case.
\subsection{Main result}For any vector field $\xi\in C^1(\Om,\R^2)$ we define the \emph{stability matrix} $\mathcal{Q}$ as
\[
\mathcal{Q}=\mathcal{Q}[D\xi,\xi]:=e^{2V+2\varphi}\Bigl(D\xi^T+(\nabla V+\nabla\varphi/2)\otimes\xi+\xi\otimes(\nabla V+\nabla\varphi/2)-\inn{\xi,\nabla V}\I\Bigr),
\]
where $\I$ denotes the identity matrix. For matrices $A$ and $B$, we will write
\[
A\geq B, \text{ whenever }\inn{A\xi,\xi}\geq\inn{B\xi,\xi}\text{ for all }\xi\in\R^2\setminus\{0\}.
\]
The main result of this paper is the following.
\begin{theorem}\label{thm_MAIN}Let $k\geq 4$ be fixed, and let $\Om$ be an open,  simply connected and bounded subset of $\R^2$ with $C^{k+1}$ boundary. Let $\nabla p_0\in H^k(\Om,\R^2)$,  and suppose that there exists $\mu_0<1$ such that
\[
\I+\mathcal{Q}[D^2p_0,\nabla p_0]\geq (1-\mu_0)\I>\I.
\]
Then, there exists a constant $C=C(\Om,V,\varphi,k)>0$ such that, setting
\[
t^*:=C\Biggl(\frac{1-\mu_0}{\norm{\nabla p_0}_{H^k(\Om)}+1}\Biggr)^{(k+1)k+2},
\]
for all $t'<t^*$ and $\alpha\in(0,1)$ there exist
\[
\nabla p_t\in C(0,t';C^{k-2,\alpha}(\Om,\R^2))\cap C^1(0,t';C^{k-3,\alpha}(\Om,\R^2)),
\]
and
\[
u_t=-e^{2V}\nabla^\perp \psi_t\in C(0,t';C^{k-2,\alpha}(\Om,\R^2)),
\]
solving the semigeostrophic Equation \eqref{eq:SG} in $[0,t']$. Moreover, in $[0,t']$ the constant of uniform ellipticity of $\I+\mathcal{Q}[D^2 p_t,\nabla p_t]$ is bounded away from zero, and $u_t,\nabla p_t\in L^\infty(0,t';H^k(\Om,\R^2))$.
\end{theorem}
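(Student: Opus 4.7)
\emph{Proof plan.} The natural strategy is to rewrite \eqref{eq:SG} in a stream-function formulation as a coupled elliptic--transport system, and then to run a Picard-type iteration in $H^k$. Since $\Om$ is simply connected and $u_t$ is divergence-free and tangent to $\partial\Om$, there exists a unique stream function $\psi_t\colon\Om\to\R$ vanishing on $\partial\Om$ with $u_t=-e^{2V}\nabla^\perp\psi_t$. Substituting this representation into the first line of \eqref{eq:SG}, expanding the Levi-Civita derivative $D^g_u(e^{\varphi}\nabla_g^\perp p)$ in flat coordinates (using $g=e^{-2V}\delta$ and $\nabla_g=e^{2V}\nabla$), applying $(\cdot)^\perp$ to isolate $u$, and finally taking the flat divergence, I expect to obtain an elliptic boundary value problem for $\psi_t$ whose principal part is the divergence-form operator associated with the matrix $\I+\calQ[D^2p_t,\nabla p_t]$, with right-hand side a second-order nonlinear expression in $p_t$ involving only $V$ and $\varphi$. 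The fact that this precise matrix must appear is structural: it encodes the Fréchet derivative of the semigeostrophic operator with respect to the velocity, so its uniform positivity is the solvability condition for $\psi$. The hypothesis at $t=0$ supplies an initial ellipticity gap. Complementarily, isolating $\partial_t\nabla p$ in the same equation yields a transport-type equation $\partial_t\nabla p_t=G[u_t,\nabla p_t,D^2p_t]$ that closes the system once $u_t$ is known from $p_t$.

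With this reduction in hand, I would set up the iteration $\nabla p^{(0)}_t\equiv\nabla p_0$; at stage $n+1$, solve the elliptic BVP with coefficients built from $\nabla p^{(n)}_t$ to obtain $\psi^{(n+1)}_t$ (hence $u^{(n+1)}_t$), then advect $\nabla p_0$ by the transport equation driven by $u^{(n+1)}_t$ to obtain $\nabla p^{(n+1)}_t$. The elliptic step is controlled by standard $H^{k+1}$-regularity for divergence-form uniformly elliptic operators on $C^{k+1}$ domains, giving $\norm{\psi^{(n+1)}_t}_{H^{k+1}}\lesssim\norm{\nabla p^{(n)}_t}_{H^k}$ with constants depending polynomially on $(1-\mu_0)^{-1}$; the transport step is handled by Moser tame energy estimates in $H^k$, using the embedding $H^{k+1}\hookrightarrow W^{1,\infty}$ (valid in dimension two for $k\geq 4$) to close a Gronwall inequality. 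The central point is the invariance of the closed set
\begin{equation*}
\mathcal{A}:=\innn*{q\in H^k(\Om,\R^2):\ \I+\calQ[Dq,q]\geq \tfrac{1-\mu_0}{2}\I,\ \norm{q}_{H^k}\leq 2\norm{\nabla p_0}_{H^k}}
\end{equation*}
under the iteration, on some interval $[0,t^*]$. Preservation of the $H^k$ ball is by the Moser bound; preservation of the ellipticity gap follows from
\begin{equation*}
\norm{D^2p^{(n+1)}_t-D^2p_0}_{L^\infty}\lesssim t\sup_{s\leq t}\norm{u^{(n+1)}_s}_{C^1},
\end{equation*}
the right-hand side being controlled by $\norm{\nabla p^{(n)}_s}_{H^k}$ through the elliptic estimate. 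Careful bookkeeping of the chained product and commutator estimates needed to propagate $k$ derivatives through the nonlinearity --- each step costing one power of $\norm{\nabla p}_{H^k}$ and one factor of $(1-\mu_0)^{-1}$ --- should produce the explicit exponent $(k+1)k+2$ appearing in $t^*$.

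Convergence of the iterates in the weaker norm $L^2\times H^1$ for $(\nabla p_t,\psi_t)$ follows by contraction, since the difference of two successive iterates solves a linear elliptic equation with $L^\infty$ coefficients coupled to a linear transport equation with $W^{1,\infty}$ coefficients. Combined with the uniform $H^k$ bound and weak-$*$ lower semicontinuity, this yields a solution with $\nabla p_t\in L^\infty(0,t^*;H^k)$; the Hölder regularity stated in the theorem is then inherited from the Sobolev embedding $H^k\hookrightarrow C^{k-2,\alpha}$, and the $C^1$-in-time regularity is recovered a posteriori by reading off $\partial_t\nabla p_t$ from the transport equation. The main obstacle I expect is the very first step: verifying rigorously that, in the curved setting with variable $V$ and $\varphi$, the elimination of $u$ from \eqref{eq:SG} produces a linear operator on $\psi$ whose principal part is in divergence form with coefficient matrix $\I+\calQ$ in exactly the form prescribed. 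The clean appearance of the stability matrix in this generality is nontrivial and relies on delicate cancellations between the conformal factor, the Coriolis exponent and the Christoffel terms carried by $D^g_u$. Once this structural identity is secured, the remainder is a technically involved but conceptually standard weighted energy argument.
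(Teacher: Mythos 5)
Your reduction is the same one the paper uses (stream function, elimination of $\partial_t\nabla^\perp p$ by taking the divergence, the stability matrix $\I+\calQ$ as the principal coefficient, a transport-type evolution for $\nabla p$), but your construction differs: a Picard iteration with lagged elliptic coefficients and a linear transport step, versus the paper's time discretization with mollification, a Cauchy--Lipschitz ODE in $H^k$ involving the Helmholtz projector $\hel$, a Riccati-type bound on $\tilde\Theta_s$ and Aubin--Lions compactness. The difference of scheme would be acceptable, but as written your two building blocks do not close, for two concrete reasons. First, the elliptic step is not ``standard $H^{k+1}$-regularity'': the coefficient matrix $e^{-2\varphi}(\I+\calQ[D^2p^{(n)},\nabla p^{(n)}])$ contains $D^2p^{(n)}$ and therefore lies only in $H^{k-1}$ when $\nabla p^{(n)}\in H^k$. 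For a generic divergence-form operator with $H^{k-1}$ coefficients the term $\diva(\calA)\cdot\nabla\psi$ is only in $H^{k-2}$, and one obtains $\nabla\psi\in H^{k-1}$, not $H^k$; your iteration then loses one derivative per step and the invariant set $\mathcal{A}$ cannot be propagated. What rescues the count in the paper is the structural cancellation $\diva\bigl(\cof(D\xi^T)\bigr)=0$ of Lemma \ref{lem:basic_est}, which makes $\diva(\calQ)$ as regular as $\calQ$, together with the symmetrization of Remark \ref{rmk:symmetrization} turning the antisymmetric part into a drift of the form $\nabla^\perp\calf\cdot\nabla\psi$; only then do the explicit estimates of Section \ref{sec:elliptic_estimates} give $\norm{\nabla\psi}_{H^k(\Om)}\lesssim(1-\mu)^{-(k+1)k-1}\bigl(\norm{\nabla p}_{H^k(\Om)}+1\bigr)^{(k+1)k}\norm{\nabla p}_{H^k(\Om)}$ as in \eqref{eq:U_EST}. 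This is also where the exponent $(k+1)k+2$ in $t^*$ actually comes from: a qualitative ``polynomial dependence on $(1-\mu_0)^{-1}$'' does not yield the stated $t^*$, one needs the explicit powers of $\lambda^{-1}$ tracked in Proposition \ref{prop:global_estimates}.

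Second, your iteration does not preserve the gradient structure. If you advect $\nabla p_0$ by the velocity built from the \emph{previous} iterate, the right-hand side of the evolution is not curl-free (the elliptic equation for $\psi$ is precisely the compatibility condition making it a gradient, but you impose it with lagged coefficients), so $q^{(n+1)}_t$ is in general not a gradient, ``$D^2p^{(n+1)}$'' is undefined at the next stage, and the symmetry underlying the stability matrix degrades. The paper avoids this by inserting the projector $\hel$ into the approximate evolution \eqref{eq:discrete_p} and removing it only in the limit, where the discrepancy $w_t=X_t-\hel(X_t)$ is divergence-free, curl-free (by the elliptic equation satisfied by the limit $\psi_t$) and tangent to $\partial\Om$, hence vanishes because $\Om$ is simply connected. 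Your plan needs either the same projection plus this final consistency argument, or a separate Gr\"onwall estimate showing $\curl q^{(n)}_t\equiv 0$ is propagated; neither appears, and the minor inaccuracy in your ellipticity-propagation bound (the missing factor of $\norm{D^3p}_{L^\infty}$-type terms in $\partial_t D^2p$) should also be repaired, though that one is harmless on the invariant set.
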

\subsection{Structure of the paper and strategy of the proof}
In Section \ref{sec:elliptic_estimates} we start by developing the estimates of general elliptic partial differential equations with Dirichlet boundary condition in the form
\begin{equation}\label{eq:model_case_elliptic}
\begin{cases}
\diva(\calA\nabla \phi)+\calb\cdot\nabla\phi=\diva(\calF),&\text{ in }\Om\\
\phi=0,&\text{ on }\partial\Om,
\end{cases}
\end{equation}
where $\calA=\calA(x)$ is supposed uniformly elliptic, that is $\calA(x)\geq\lambda\I$ for some $\lambda>0$ and all $x\in\Om$. We will take advantage of the classic regularity theory in the Sobolev space $H^k(\Om)=\{f\in L^2(\Om):D^\alpha f\in L^2(\Om),\abs{\alpha}\leq k\}$, $k\geq 4$, to find an explicit upper bound on the constant $C>0$ realizing
\[
\norm{\nabla\phi}_{H^k(\Om)}\leq C\Bigl(\norm{\nabla\phi}_{L^2(\Om)}+\norm{\calF}_{H^k(\Om)}\Bigr),
\]
in terms of the $H^{k-1}$-norm of $\calA$, $\diva(\calA)$, $\calb$ and the elliptic constant $\lambda$. The key observation here is that if $\diva(\calA)$ shares the same regularity as $\calA$, then we gain two derivatives for the solution $\phi$ instead of one.\par Section \ref{sec:3} is devoted entirely to the construction of an approximate solution. We start by taking advantage of the conformal nature of the metric to "flatten" the Riemannian operators and see \eqref{eq:primitive_SG} as a lower order perturbation of the equation in $(\R^2,dx)$. Then, we formally obtain an elliptic partial differential equation for the potential of the velocity (recall that $\Om$ is simply connected and the fluid is incompressible) of the form \eqref{eq:model_case_elliptic} "killing" the time derivative on the rotated gradient $\partial_t\nabla^\perp p$ by applying the divergence operator on both sides of the semigeostrophic equation. In particular $\calA$ has the form $\I+\cof(\mathcal{Q})$, and here is where the stability condition comes from as a necessary requirement of solvability. A very nice cancellation property of the cofactor matrix ensures $\norm{\mathcal{Q}}_{H^{k-1}(\Om)}\sim\norm{\diva(\cof(\mathcal{Q}))}_{H^{k-1}(\Om)}$, allowing us to take full advantage of the previous general elliptic estimates. We then construct a sequence of approximate solutions regularizing the semigeostrophic equation and discretizing the time in little steps.
\par In order to prove uniform existence of a sequence of regularized solutions, in Section \ref{sec:energy} we operate an Energy Estimate on the Sobolev norm of the pressure gradient and the elliptic constant $\lambda$ of $\I+\mathcal{Q}$. Here the elliptic regularity estimate on the velocity plays a role to prove that 
\[
\abs*{\frac{d}{dt}(-\lambda)}+\abs*{\frac{d}{dt}\norm{\nabla p}_{H^k(\Om)}}\lesssim\Bigl(\frac{\norm{\nabla p}_{H^k(\Om)+1}}{\lambda}\Bigr)^{M(k)},
\]
for some exponent $M(k)>0$. A Grönwall-type argument on a well chosen function completes the proof of uniform existence local-in-time of the approximate solutions. 
\par In Section \ref{sec:compactness} we extract a smooth solution of the semigeostrophic equations by applying a suitable argument of compactness.
\subsection*{Acknowledgments}
The author would like to thank Professor A. Figalli for his guidance and constant support. The author has received funding from the European Research Council under the Grant Agreement No. 721675 “Regularity and Stability in Partial Differential Equations (RSPDE)”.\section{Explicit elliptic estimates}\label{sec:elliptic_estimates}
We refer to \cite{A18} and \cite{E15} for the classical elliptic regularity methods that we will employ. We start by stating two useful interpolation results.
\begin{lemma}\label{thm:Adams}Let $\Om\subset\R^n$ be any bounded and smooth domain. Then, for every $0\leq k\leq m$ there exists a constant $c=c(k,m,\Om)>0$ such that
\[
\norm{v}_{H^k(\Om)}\leq c\norm{v}_{L^2(\Om)}^{1-\frac{k}{m}}\norm{v}_{H^m(\Om)}^{\frac{k}{m}},
\]
for every $v\in H^m(\Om)$.
\end{lemma}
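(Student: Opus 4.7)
The plan is to reduce the estimate to $\R^n$ by a bounded extension, and then to prove the full-space version by a Fourier/Hölder argument. Pick a Stein (or Calderón) total extension operator $E\colon H^m(\Om)\to H^m(\R^n)$ with the property that $E$ is simultaneously bounded $H^j(\Om)\to H^j(\R^n)$ for every $0\leq j\leq m$, with operator norms depending only on $m$ and $\Om$. Combining the restriction bound $\norm{v}_{H^k(\Om)}\leq\norm{Ev}_{H^k(\R^n)}$ with the two endpoint estimates $\norm{Ev}_{L^2(\R^n)}\leq c\,\norm{v}_{L^2(\Om)}$ and $\norm{Ev}_{H^m(\R^n)}\leq c\,\norm{v}_{H^m(\Om)}$ reduces the claim to the analogous inequality for functions on $\R^n$.

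On $\R^n$, I would use the Plancherel identity to write
\[
\norm{v}_{H^k(\R^n)}^2\sim\int_{\R^n}(1+\abs{\xi}^2)^k\abs{\hat v(\xi)}^2\,d\xi=\int_{\R^n}\bigl[(1+\abs{\xi}^2)^m\abs{\hat v(\xi)}^2\bigr]^{k/m}\bigl[\abs{\hat v(\xi)}^2\bigr]^{(m-k)/m}\,d\xi,
\]
and then apply Hölder's inequality with conjugate exponents $m/k$ and $m/(m-k)$, the endpoint cases $k=0$ and $k=m$ being trivial. This gives
\[
\norm{v}_{H^k(\R^n)}^2\leq c\,\norm{v}_{H^m(\R^n)}^{2k/m}\norm{v}_{L^2(\R^n)}^{2(m-k)/m},
\]
and taking square roots yields the desired interpolation bound on $\R^n$, which transfers to $\Om$ via the extension step described above.

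There is no genuine obstacle here: the only substantive ingredient is the existence of an extension operator that is simultaneously bounded at every integer order $0\leq j\leq m$, which is classical. An equivalent route is to identify $H^k(\Om)$ as the real interpolation space $(L^2(\Om),H^m(\Om))_{k/m,2}$ and invoke the abstract interpolation inequality; either approach delivers the stated bound with a constant depending only on $k$, $m$ and $\Om$.
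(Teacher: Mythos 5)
Your proof is correct. Note, though, that the paper does not actually prove this lemma: it simply cites Adams' book (Chapter 5), where the inequality is established by the classical route of one-dimensional interpolation inequalities for intermediate derivatives combined with localization and extension. Your argument is a self-contained alternative tailored to the Hilbert case $p=2$: reduce to $\R^n$ via a total (Stein-type) extension operator that is simultaneously bounded $H^j(\Om)\to H^j(\R^n)$ for all $0\leq j\leq m$, then interpolate on the Fourier side by writing $(1+\abs{\xi}^2)^k\abs{\hat v}^2=\bigl[(1+\abs{\xi}^2)^m\abs{\hat v}^2\bigr]^{k/m}\bigl[\abs{\hat v}^2\bigr]^{(m-k)/m}$ and applying H\"older with exponents $m/k$ and $m/(m-k)$, the endpoints $k=0,m$ being trivial. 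This is clean and gives the constant's dependence on $k,m,\Om$ exactly as claimed; the only substantive ingredient you invoke, the simultaneous boundedness of the extension at every integer order (including $j=0$), is indeed classical for bounded smooth domains. What the Fourier route buys is brevity and transparency of the exponents, at the cost of being specific to $L^2$-based spaces; the approach cited in the paper (or your alternative remark identifying $H^k(\Om)$ with the real interpolation space $(L^2(\Om),H^m(\Om))_{k/m,2}$) extends to $W^{k,p}$ scales as well. Either way, the statement is fully justified.
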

\begin{proof}The proof can be found in \cite[Chapter 5]{A08}.
\end{proof}
\begin{lemma}Let $\Om\subset \R^2$ be any smooth and bounded domain, and  let $v,w$ be functions in $H^r(\Om)$. Then, there exists $C=C(r,\Om)>0$ such that
\begin{equation}\label{eq:banach_algebra}
\norm{D^\alpha(vw)}_{L^2(\Om)}\leq C\Bigl(\norm{v}_{L^\infty(\Om)}\norm{w}_{H^r(\Om)}+\norm{w}_{L^\infty(\Om)}\norm{v}_{H^r(\Om)}\Bigr),
\end{equation}
for all multi-index $\abs{\alpha}=r$.
In particular, the following inequalities
\begin{equation}\label{eq:interpol_1}
\norm{D^\alpha (vw)-vD^\alpha w}_{L^2(\Om)}\leq C_r\Bigl(\norm{\nabla v}_{L^\infty(\Om)}\norm{w}_{H^{r-1}(\Om)}+\norm{v}_{H^r(\Om)}\norm{w}_{L^\infty(\Om)}\Bigr),
\end{equation}
and
\begin{equation}\label{eq:interpol_2}
\norm{D^\alpha(vw)-vD^\alpha w-wD^\alpha v}_{L^2(\Om)}\leq C_r\Bigl(\norm{\nabla v}_{L^\infty(\Om)}\norm{w}_{H^{r-1}(\Om)}+\norm{v}_{H^{r-1}(\Om)}\norm{\nabla w}_{L^\infty(\Om)}\Bigr),
\end{equation}
hold.
\end{lemma}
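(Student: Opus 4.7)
The plan is to derive all three inequalities from the Leibniz formula
\[
D^\alpha(vw) = \sum_{\beta \le \alpha} \binom{\alpha}{\beta}\, D^\beta v\, D^{\alpha-\beta} w,
\]
combined with the Gagliardo--Nirenberg interpolation inequality
\[
\norm{D^j u}_{L^{2r/j}(\Om)} \le C\, \norm{u}_{L^\infty(\Om)}^{1-j/r}\, \norm{u}_{H^r(\Om)}^{j/r}, \qquad 1\le j \le r-1,
\]
which in any smooth bounded planar domain follows after fixing a universal extension operator $E\colon H^r(\Om)\to H^r(\R^2)$ (controlling $\norm{Eu}_{L^\infty(\R^2)}$ by $\norm{u}_{L^\infty(\Om)}$ via a reflection and a cut-off) and applying the classical GN inequality in $\R^2$.

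For the product estimate \eqref{eq:banach_algebra}, I apply H\"older to each intermediate Leibniz term (with $j := \abs{\beta}$, $1\le j\le r-1$) using dual exponents $2r/j$ and $2r/(r-j)$, followed by Gagliardo--Nirenberg on each factor. The resulting bound on the typical intermediate term is
\[
C\, \Bigl(\norm{v}_{L^\infty(\Om)}\norm{w}_{H^r(\Om)}\Bigr)^{(r-j)/r}\Bigl(\norm{w}_{L^\infty(\Om)}\norm{v}_{H^r(\Om)}\Bigr)^{j/r},
\]
and Young's inequality with conjugate exponents $r/(r-j)$ and $r/j$ converts this geometric mean into the required arithmetic sum. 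The two endpoint terms $j=0$ and $j=r$ are handled directly by H\"older, giving respectively $\norm{v}_{L^\infty(\Om)}\norm{w}_{H^r(\Om)}$ and $\norm{w}_{L^\infty(\Om)}\norm{v}_{H^r(\Om)}$.

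The commutator inequalities then follow by removing endpoint contributions from the Leibniz sum and reducing to \eqref{eq:banach_algebra} at a lower order. The $\beta=0$ term is precisely $vD^\alpha w$, so
\[
D^\alpha(vw) - v D^\alpha w = \sum_{\beta \le \alpha,\, \abs{\beta}\ge 1} \binom{\alpha}{\beta}\, D^\beta v\, D^{\alpha-\beta} w.
\]
For each term on the right, pick any coordinate $i$ with $\beta_i \ge 1$ and write $D^\beta v = D^{\beta - e_i}(\partial_i v)$; the sum is then bounded termwise by the generic Leibniz expansion of order $r-1$ of the product $(\partial_i v)\cdot w$, to which \eqref{eq:banach_algebra} applies, and using $\norm{\partial_i v}_{H^{r-1}(\Om)}\le \norm{v}_{H^r(\Om)}$ yields \eqref{eq:interpol_1}. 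For \eqref{eq:interpol_2} we also subtract the $\beta = \alpha$ term (which equals $w D^\alpha v$), so the residual sum ranges over $1 \le \abs{\beta} \le r-1$; each such multi-index admits a factorization of one derivative from \emph{each} side, giving a Leibniz expansion of order $r-2$ applied to $(\partial_i v)(\partial_j w)$, and \eqref{eq:banach_algebra} at order $r-2$ closes the estimate with the $L^\infty$ norms of $\nabla v$ and $\nabla w$.

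The only genuinely delicate point is the bounded-domain Gagliardo--Nirenberg inequality in the precise form with $L^\infty$ on the right-hand side (a robust statement, but requiring the extension argument above); once it is in place, everything else is algebraic bookkeeping with Leibniz, H\"older, and Young.
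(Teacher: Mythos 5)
Your proposal is correct. Note that the paper does not prove this lemma at all: it simply cites Majda--Bertozzi \cite{MB01}, Lemma 3.4, where these Moser-type calculus inequalities are established (on the whole space) by exactly the scheme you use, namely Leibniz, H\"older with exponents $2r/j$ and $2r/(r-j)$, the Gagliardo--Nirenberg interpolation $\norm{D^j u}_{L^{2r/j}}\lesssim\norm{u}_{L^\infty}^{1-j/r}\norm{u}_{H^r}^{j/r}$, and Young's inequality, with the commutator bounds \eqref{eq:interpol_1}--\eqref{eq:interpol_2} obtained by peeling one derivative off $v$ (respectively one off $v$ and one off $w$) and reapplying the product estimate at order $r-1$ (respectively $r-2$). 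What your write-up adds, and what is genuinely needed to transfer the cited lemma from $\R^2$ to a bounded $\Om$, is the extension step: a Stein- or reflection-type operator $E\colon H^r(\Om)\to H^r(\R^2)$ that is simultaneously bounded on $L^\infty$, so that the multiplicative Gagliardo--Nirenberg inequality survives on $\Om$ with the full $H^r(\Om)$-norm on the right; you identify this correctly as the only delicate point. Two small presentational remarks: the reduction of \eqref{eq:interpol_1} and \eqref{eq:interpol_2} really invokes the \emph{termwise} bounds established inside the proof of \eqref{eq:banach_algebra} (each single Leibniz term $D^{\beta'}(\partial_i v)D^{\gamma'}w$ is controlled), not the stated inequality for the full derivative of the product, so it is cleaner to phrase \eqref{eq:banach_algebra} as a bound on each Leibniz term; and for $r=1$ the residual sum in \eqref{eq:interpol_2} is empty, so the order-$(r-2)$ step should be restricted to $r\ge 2$. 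Neither affects the validity of the argument.
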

\begin{proof}The proof can be found in \cite[Lemma 3.4]{MB01}.
\end{proof}
\subsection{Set-up}
Let $\Om$ be an open, bounded subset of $\R^2$, and suppose we are given a symmetric matrix $\calA\in C^\infty(\bar\Om)^{2\times 2}$ and vector fields $\calb, \calF\in C^\infty(\bar\Om)^{2}$, such that there exists $\lambda>0$ satisfying
\[
0<\lambda\I\leq \calA.
\]
Define $\diva(\calA)\in C^\infty(\bar\Om)^2$ as
\[
\diva(\calA)^j:=\sum_{i=1}^2\partial_i\calA_{ij},
\]
such that
\[
\diva(\calA\nabla\phi)=\tr(\calA D^2\phi)+\diva(\calA)\cdot\nabla\phi,\quad\forall\phi\in C^2(\Om).
\]
Let $\phi\in C^\infty(\bar\Om)$ be solution of
\begin{equation}\label{eq:elliptic}
\begin{cases}
\diva(\calA\nabla\phi)+\calb\cdot\nabla\phi=\diva(\calF),\,&\text{in }\Om,\\
\phi=0,\,&\text{on }\partial\Om.
\end{cases}
\end{equation}
The goal of this section is to prove the following global estimate.
\begin{proposition}[Global estimates]\label{prop:global_estimates}Suppose that $\partial\Om$ is of class $C^{k+1}$ for some $k\geq 4$. Then, there exists a universal constant $C_{k,\Om}>0$ such that
\begin{equation}\label{eq:GLOBAL_EST1}
\norm{\nabla\phi}_{H^k(\Om)}\leq C_{\Om,k}\Bigl(\lambda^{-(k+1)k}\mathbf{M}^{(k+1)k}\norm{\nabla\phi}_{L^2(\Om)}+\lambda^{-(k+1)}\mathbf{M}^{k}\norm{\calF}_{H^k(\Om)}\Bigr),
\end{equation}
where 
\[
\mathbf{M}:=\Bigl(\norm{\calA}_{H^{k-1}(\Om)}+\norm{\diva(\calA)}_{H^{k-1}(\Om)}+\norm{\calb}_{H^{k-1}(\Om)}\Bigr).
\]
Moreover, if $\calb=\nabla^\perp \calf$ for some $\calf\in C^\infty(\bar\Om)$, then
\begin{equation}\label{eq:GLOBAL_EST2}
\norm{\nabla\phi}_{H^k(\Om)}\leq C_{\Om,k}\lambda^{-(k+1)k-1}\mathbf{M}^{(k+1)k}\norm{\calF}_{H^k(\Om)}.
\end{equation}
\end{proposition}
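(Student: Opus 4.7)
The strategy proceeds in three stages: a base $L^2$ energy estimate, a standard $H^2$ regularity step, and a differentiate-and-bootstrap induction in the order of differentiability, all relying on the commutator estimates \eqref{eq:interpol_1}--\eqref{eq:interpol_2} and on the interpolation inequality of Lemma \ref{thm:Adams} to recycle intermediate Sobolev norms.

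\emph{Base step.} Testing \eqref{eq:elliptic} against $\phi$ and integrating by parts produces
\[
\lambda\norm{\nabla\phi}_{L^2(\Om)}^2 \leq \int_\Om\calF\cdot\nabla\phi - \int_\Om(\calb\cdot\nabla\phi)\phi,
\]
from which Cauchy--Schwarz and Poincaré yield a bound for $\norm{\nabla\phi}_{L^2}$ in terms of $\lambda^{-1}\norm{\calF}_{L^2}$ plus a controllable remainder. When $\calb=\nabla^\perp\calf$, an additional integration by parts of the second integral, combined with $\diva(\calb)=0$ and the Dirichlet boundary condition, makes this term disappear; this yields the clean bound $\norm{\nabla\phi}_{L^2}\leq \lambda^{-1}\norm{\calF}_{L^2}$, which is the seed needed for \eqref{eq:GLOBAL_EST2}.

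\emph{Higher-regularity induction.} Rewrite \eqref{eq:elliptic} in non-divergence form as $\tr(\calA D^2\phi) = \diva(\calF)-\diva(\calA)\cdot\nabla\phi-\calb\cdot\nabla\phi$. For $|\alpha|=j-1$ with $j=2,\ldots,k+1$, apply $\partial^\alpha$ to obtain an equation of the same elliptic type for $\partial^\alpha\phi$, with right-hand side $\partial^\alpha\diva(\calF)+\mathcal{R}_j$, where $\mathcal{R}_j$ collects the commutators of $\partial^\alpha$ with the coefficients. The Nirenberg difference-quotient technique, combined with the Sobolev embedding $H^{k-1}(\Om)\hookrightarrow W^{1,\infty}(\Om)$ in dimension two (valid for $k\geq 4$), then produces
\[
\norm{\nabla\phi}_{H^j}\leq \frac{C}{\lambda}\bigl(\mathbf{M}\norm{\nabla\phi}_{H^{j-1}}+\norm{\calF}_{H^j}+\norm{\mathcal{R}_j}_{L^2}\bigr).
\]
The ``two-derivative gain'' observation from the introduction enters here: when we expand $\mathcal{R}_j$, the worst terms are $[\partial^\alpha,\calA]D^2\phi$ and $[\partial^\alpha,\diva(\calA)\,\cdot]\nabla\phi$, and the bounds \eqref{eq:interpol_1}--\eqref{eq:interpol_2} control them by $\mathbf{M}\cdot\norm{\nabla\phi}_{H^{j-1}}$ precisely because $\diva(\calA)$ is assumed to lie in $H^{k-1}$ on its own. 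This yields a recursion $\norm{\nabla\phi}_{H^j}\leq C\lambda^{-1}(\mathbf{M}\norm{\nabla\phi}_{H^{j-1}}+\norm{\calF}_{H^j})$.

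\emph{Iteration and conclusion.} Iterating the recursion from $j=1$ up to $j=k$, interpolating the intermediate norms $\norm{\nabla\phi}_{H^\ell}$ via Lemma \ref{thm:Adams} between $\norm{\nabla\phi}_{L^2}$ and $\norm{\nabla\phi}_{H^k}$, and absorbing the top-order interpolated pieces into the left-hand side by Young's inequality, produces \eqref{eq:GLOBAL_EST1} with polynomial-in-$k$ exponents. In the divergence-free case, substituting the sharp first-stage $L^2$ bound into \eqref{eq:GLOBAL_EST1} yields \eqref{eq:GLOBAL_EST2}. The main technical obstacle is the quantitative bookkeeping of constants: each interpolation-and-absorption cycle introduces a power of $\lambda^{-1}\mathbf{M}$ whose exponent depends on the level $j$, and one must track these carefully through $k$ iterations to confirm that the final exponent grows only as $(k+1)k$ rather than exponentially in $k$.
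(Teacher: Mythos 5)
Your overall architecture (commutator estimates, a level-by-level recursion in $j$, interpolation via Lemma \ref{thm:Adams}, and the special integration by parts when $\calb=\nabla^\perp\calf$) is in the same spirit as the paper, and the cancellation you invoke for $\calb=\nabla^\perp\calf$ is exactly the right mechanism for \eqref{eq:GLOBAL_EST2}. But there is a genuine gap: you never treat the boundary. The recursion $\norm{\nabla\phi}_{H^j}\leq C\lambda^{-1}\bigl(\mathbf{M}\norm{\nabla\phi}_{H^{j-1}}+\norm{\calF}_{H^j}\bigr)$ is only what the interior (or purely tangential) argument gives. Near $\partial\Om$, difference quotients or the cutoff test-function argument control tangential derivatives only, and every normal derivative must be recovered algebraically from the equation by dividing by $\calA_{22}\geq\lambda$; at order $j$ up to $j$ normal directions occur, these divisions compound, and each one also feeds back previously estimated derivatives that already carry powers of $\lambda^{-1}$. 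This is precisely where the paper's induction on the number of normal derivatives produces the loss $\lambda^{-(k+1)}$ at the top level (Proposition \ref{prop:flat_boundary_est}), and hence the exponent $(k+1)k$ in the statement. Asserting the clean one-power-per-level recursion ``by the Nirenberg technique'' skips exactly this step; you would either have to carry out that induction or accept and track worse powers. Moreover, for a curved boundary you must flatten by a diffeomorphism and verify that the transformed matrix $\calA'$ still satisfies $\norm{\diva(\calA')}_{H^{k-1}}\lesssim\norm{\calA}_{H^{k-1}}+\norm{\diva(\calA)}_{H^{k-1}}$ (Proposition \ref{prop:curved_estimate}); without this check the ``two-derivative gain'' your commutator bounds rely on is not known to survive the change of variables, since a priori $\diva(\calA')$ would only be controlled by $\norm{\calA}_{H^k}$. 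Finally, the quantitative bookkeeping that you yourself flag as ``the main technical obstacle'' is exactly the content the explicit exponents in \eqref{eq:GLOBAL_EST1}--\eqref{eq:GLOBAL_EST2} require, so deferring it leaves the statement unproved.

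A smaller but real error: in the base step for general $\calb$ you claim that $\int_\Om(\calb\cdot\nabla\phi)\phi$ leaves only a ``controllable remainder,'' so that $\norm{\nabla\phi}_{L^2}\lesssim\lambda^{-1}\norm{\calF}_{L^2}$. Without smallness of $\norm{\calb}_{L^\infty}$ relative to $\lambda$ this term is of the same order as the coercive term and cannot be absorbed; this is precisely why \eqref{eq:GLOBAL_EST1} keeps $\norm{\nabla\phi}_{L^2}$ on the right-hand side, and only in the case $\calb=\nabla^\perp\calf$, where the term vanishes identically, does one obtain the clean bound $\lambda\norm{\nabla\phi}_{L^2}\leq\norm{\calF}_{L^2}$ used for \eqref{eq:GLOBAL_EST2}. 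The claim is harmless for \eqref{eq:GLOBAL_EST1} as long as you do not use it, but as written it is false.
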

\begin{remark}\label{rmk:symmetrization}An important situation in which the particular case $\calb=\nabla^\perp\calf$ of Proposition \ref{prop:global_estimates} arises is when we symmetrize the elliptic matrix. In fact, suppose that the elliptic equation is of the form
\[
\diva\bigl((\calA+\tilde\calA)\nabla\phi\bigr)=\diva(\calF),
\]
where $\tilde\calA$ is an antisymmetric matrix. In this case we have that
\[
\diva\bigl(\tilde\calA\nabla\phi\bigr)=\diva(\tilde\calA)\cdot\nabla\phi+\tr(\tilde\calA D^2\phi)=\partial_1\tilde\calA_{12}\partial_2\phi+\partial_2\tilde\calA_{21}\partial_1\phi=\partial_1\tilde\calA_{12}\partial_2\phi-\partial_2\tilde\calA_{12}\partial_1\phi=\nabla^\perp\tilde\calA_{12}\cdot\nabla\phi,
\]
i.e. the coefficient $\calb$ comes from the rotated potential $\calf=\tilde\calA_{12}$.
\end{remark}
\subsection{Rescaled elliptic estimates}
Fix $k\geq 4$. To simplify the exposition of the following estimates, we will write
\[
a\lesssim b,\,(\text{or }a\lesssim_r b),
\]
if there exists some constant $c=c(\Om,k)>0$ (respectively $c=c(\Om,k,r)>0$),  such that
\[
\abs{a}\leq c b.
\]
In this section, we will suppose that
\begin{equation}\label{eq:renorm}
\lambda,\norm{\calA}_{H^{k-1}(\Om)},\norm{\diva(\calA)}_{H^{k-1}(\Om)},\norm{\calb}_{H^{k-1}(\Om)}\leq 1.
\end{equation}
Consequently, by Sobolev embeddings, we also have that
\[
\norm{\calA}_{W^{k-3,\infty}(\Om)},\norm{\calb}_{W^{k-3,\infty}(\Om)}\lesssim 1.
\]
We start by proving a local interior estimate.
\begin{proposition}[Rescaled interior estimates]\label{prop:interior_est}Fix $x_0\in\Om$ and $r>0$ such that $B_r:=B(x_0,r)\subset\Om$. Then, the  interior estimate
\begin{equation}\label{eq:interior_estimate}
\norm{\nabla\phi}_{H^{k}(B_{r/2})}\lesssim_r\frac{1}{\lambda}\Bigl(\norm{\nabla\phi}_{H^{k-1}(B_r)}+\norm{\calF}_{H^k(B_r)}\Bigr),
\end{equation}
holds.
\end{proposition}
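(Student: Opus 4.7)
The plan is to perform a classical interior Caccioppoli-type energy estimate on the $k$-th order derivatives of $\phi$, a one-step regularity gain that upgrades the assumed $H^{k-1}(B_r)$ control of $\nabla\phi$ to $H^k(B_{r/2})$, while tracking explicit constants in $\lambda$ and in the coefficient norms appearing in (\ref{eq:renorm}). Since $\phi\in C^\infty(\bar\Om)$ we can differentiate the PDE directly, with no need for Nirenberg difference quotients.

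First I would fix a cut-off $\eta\in C_c^\infty(B_r)$ with $\eta\equiv 1$ on $B_{r/2}$ and $\|D^j\eta\|_\infty\lesssim_{r,j}1$. For each multi-index $\alpha$ with $|\alpha|=k$, differentiating (\ref{eq:elliptic}) and reorganising via the Leibniz rule yields
\[
\diva(\calA\nabla\partial^\alpha\phi) + \calb\cdot\nabla\partial^\alpha\phi = \diva\bigl(\partial^\alpha\calF - \mathbf{R}_1\bigr) + \mathbf{R}_2,
\]
with commutators $\mathbf{R}_1 = [\partial^\alpha,\calA]\nabla\phi$ and $\mathbf{R}_2 = -[\partial^\alpha,\calb\,\cdot\,]\nabla\phi$. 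Testing against $\eta^2\partial^\alpha\phi$, integrating by parts, and using ellipticity $\calA\geq\lambda\I$ with Young's inequality (weight proportional to $\lambda$) to absorb the cross-terms gives a Caccioppoli-type inequality of the shape
\[
\lambda\int\eta^2|\nabla\partial^\alpha\phi|^2 \lesssim_r \lambda^{-1}\bigl(\|\calF\|_{H^k(B_r)}^2 + \|\mathbf{R}_1\|_{L^2(B_r)}^2\bigr) + \|\mathbf{R}_2\|_{L^2(B_r)}\|\eta\partial^\alpha\phi\|_{L^2(B_r)} + \text{cutoff errors}.
\]
Expanding the commutators by Leibniz into sums $\sum_{0<\beta\leq\alpha}\partial^\beta\calA\cdot\nabla\partial^{\alpha-\beta}\phi$ (and the analog for $\calb$), and using the Moser-type inequalities (\ref{eq:interpol_1})--(\ref{eq:interpol_2}), the 2D Sobolev embedding $H^{k-3}\hookrightarrow L^\infty$ (valid because $k\geq 4$), and the normalisation (\ref{eq:renorm}), controls them by $\|\nabla\phi\|_{H^{k-1}(B_r)}$. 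Summing over $|\alpha|=k$ and combining with $\|\nabla\phi\|_{H^{k-1}(B_r)}$ itself on the right-hand side yields (\ref{eq:interior_estimate}).

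The hard part is the top-order contribution $\partial^\alpha\calA\cdot\nabla\phi$ inside $\mathbf{R}_1$: naively applying (\ref{eq:interpol_1}) would require $\calA\in H^k$, one derivative better than the hypothesis. Since this piece sits under the outer $\diva$, in the energy identity it is tested as $\int\partial^\alpha\calA\cdot\nabla\phi\cdot\nabla(\eta^2\partial^\alpha\phi)$; writing $\partial^\alpha=\partial_i\partial^{\alpha-e_i}$ and integrating by parts once moves one derivative off $\calA$, producing terms of the form $\partial^{\alpha-e_i}\calA\in L^2$ paired with products of $D^2\phi\in L^\infty$ (by Sobolev, since $k\geq 4$) and $\eta\nabla\partial^\alpha\phi\in L^2$, together with cutoff derivatives. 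The resulting contribution is linear in $\|\eta\nabla\partial^\alpha\phi\|_{L^2}$ and can be absorbed into the LHS via Young's inequality. The auxiliary hypothesis $\diva(\calA)\in H^{k-1}$ plays exactly the same role for the twin top-order term that appears when one expands $\diva(\calA\nabla\phi)$ in non-divergence form, and it is this structural assumption that encodes the ``two derivatives gain'' alluded to in the introduction.
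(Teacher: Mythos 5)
Your set-up coincides with the paper's (differentiate $k$ times, test against $\xi:=\eta^2\partial_\alpha\phi$, use $\calA\geq\lambda\I$ and Young's inequality with weight $\sim\lambda$, control commutators via \eqref{eq:interpol_1}--\eqref{eq:interpol_2}), and you correctly isolate the crux: the piece of $\mathbf{R}_1$ where all $k$ derivatives land on $\calA\in H^{k-1}$. But your treatment of that piece has a genuine gap. After testing, this term reads $\int\inn{\partial_i\partial_\beta\calA\,\nabla\phi,\nabla\xi}\,dx$ with $\abs{\beta}=k-1$; integrating by parts in $x_i$ gives
\[
-\int\inn{\partial_\beta\calA\,\partial_i\nabla\phi,\nabla\xi}\,dx-\int\inn{\partial_\beta\calA\,\nabla\phi,\partial_i\nabla\xi}\,dx,
\]
and you only account for the first integral. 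The second contains $\eta^2\,\partial_\beta\calA\,\nabla\phi\cdot\nabla\partial_i\partial_\alpha\phi$, i.e.\ $(k+2)$ derivatives of $\phi$ paired with a coefficient that is merely $L^2$: it is not of the form you list, it cannot be absorbed into $\lambda\int\eta^2\abs{\nabla\partial_\alpha\phi}^2$, and integrating by parts again only returns you to the starting point. Note also that the hypothesis on $\diva(\calA)$ never actually enters your computation as written, and that the top term $\partial_\alpha\calb\cdot\nabla\phi$ of $\mathbf{R}_2$ is likewise not in $L^2$ under \eqref{eq:renorm} (only $\calb\in H^{k-1}$ is assumed), so it cannot be handled by the Moser inequalities alone as you claim.

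The fix is the paper's manipulation of its term (IV): keep this contribution paired with $\xi$ itself, i.e.\ as $\int\diva(\partial_\alpha\calA\,\nabla\phi)\,\xi\,dx$, and rewrite the integrand through
\[
\diva(\partial_\alpha\calA\,\nabla\phi)=\partial_i\Bigl(\diva(\partial_\beta\calA\,\nabla\phi)\Bigr)-\diva(\partial_\beta\calA\,\partial_i\nabla\phi)
\]
before integrating by parts; then only $\partial_i\xi$ and $\nabla\xi$ appear (at most $k+1$ derivatives of $\phi$), and expanding $\diva(\partial_\beta\calA\,\nabla\phi)=\tr(\partial_\beta\calA\,D^2\phi)+\partial_\beta\diva(\calA)\cdot\nabla\phi$ is precisely where $\norm{\diva(\calA)}_{H^{k-1}(\Om)}\leq 1$ is used --- this is the ``twin term'' you allude to but never exhibit. (Equivalently, the paper first subtracts the double commutator $\calX=\partial_\alpha\calA\nabla\phi+\calA\partial_\alpha\nabla\phi-\partial_\alpha(\calA\nabla\phi)$, which \eqref{eq:interpol_2} controls using only $\norm{\calA}_{H^{k-1}(\Om)}$, so that the single problematic term $\diva(\partial_\alpha\calA\nabla\phi)$ is isolated from the start.) The $\calb$-term is repaired in the same spirit but more easily: write $\partial_\alpha=\partial_\beta\partial_i$, integrate by parts once to put $\partial_i$ on $\xi$, and apply \eqref{eq:interpol_1} at order $k-1$; no over-differentiated $\phi$ arises there. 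With these corrections your energy scheme does yield \eqref{eq:interior_estimate}.
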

\begin{proof}
Let $\abs{\alpha}=k$ be any multi-index. Then, differentiating $\alpha$-times \eqref{eq:elliptic}, we have that
\[
0=-\diva\bigl(\partial_\alpha(\calA\nabla\phi)\bigr)-\partial_\alpha(\calb\nabla\phi)+\diva(\partial_\alpha\calF),
\]
which implies, adding $\diva(\calA\partial_\alpha\nabla\phi)$ to both sides, that
\begin{equation}\label{eq:elliptic1}
\begin{split}
\diva(\calA\partial_\alpha\nabla\phi)&=\diva\bigl(\calA\partial_\alpha\nabla\phi-\partial_\alpha(\calA\nabla\phi)\bigr)-\partial_\alpha(\calb\nabla\phi)+\diva(\partial_\alpha\calF)\\
&=\diva\bigl(\partial_\alpha\calA\nabla\phi+\calA\partial_\alpha\nabla\phi-\partial_\alpha(\calA\nabla\phi)\bigr)-\partial_\alpha(\calb\nabla\phi)+\diva(\partial_\alpha\calF)-\diva(\partial_\alpha\calA\nabla\phi),
\end{split}
\end{equation}
where in the second line we simply add and subtract $\diva(\partial_\alpha\calA\nabla\phi)$. Call
\[
\calX:=\partial_\alpha\calA\nabla\phi+\calA\partial_\alpha\nabla\phi-\partial_\alpha(\calA\nabla\phi).
\]
Fix $x_0\in \Om$ and $r>0$ such that $B_r:=B(x_0,r)\subset\Om$. Choose $\eta\in C^\infty_c(B_r)$ such that $\eta\lvert_{B_{r/2}}\equiv 1$, $\eta\lvert_{\R^2\setminus B_r}\equiv0$ and $0\leq\eta\leq 1$. Testing Equation \eqref{eq:elliptic1} against $\xi:=\eta^2\partial_\alpha\phi$ gives
\[
\int\inn{\calA\partial_\alpha\nabla\phi,\nabla\xi}\,dx=\int\inn{\calX+\partial_\alpha\calF,\nabla\xi}\,dx+\int\partial_\alpha(\calb\nabla\phi)\xi\,dx+\int\diva(\partial_\alpha\calA\nabla\phi)\xi\,dx.
\]
Since $\nabla\xi=\eta^2\nabla\partial_\alpha\phi+2\partial_{\alpha}\phi\eta\nabla\eta$, taking advantage of the ellipticity of $\calA$ we can estimate
\begin{equation}\label{eq:elliptic2}
\begin{split}
\lambda\int\eta^2\abs{\partial_\alpha\nabla\phi}^2\,dx&\leq\underbrace{-\int\inn{\calA\partial_\alpha\nabla\phi,2\partial_\alpha\phi\eta\nabla\eta}\,dx}_{(I)}+\underbrace{\int\inn{\calX+\partial_\alpha\calF,\nabla\xi}\,dx}_{(II)}+\underbrace{\int\partial_\alpha(\calb\nabla\phi)\xi\,dx}_{(III)}\\
&\quad+\underbrace{\int\diva(\partial_\alpha\calA\nabla\phi)\xi\,dx}_{(IV)}.
\end{split}
\end{equation}
We will now treat (I)-(IV) separately. By the Young inequality, since $\norm{\calA}_{L^\infty(\Om)}\lesssim 1$, we have that
\[
(I)\lesssim_r\frac{1}{\epsilon}\int_{B_r}\abs{\partial_\alpha\phi}^2\,dx+\epsilon\int\eta^2\abs{\nabla\partial_\alpha\phi}^2\,dx,
\]
for every $\epsilon>0$. Now, observe that for every smooth function $h$ and $0<\epsilon\leq 1$, it holds that
\begin{equation}\label{eq:nabla_xi}
\begin{split}
\int\abs{h}\abs{\nabla\xi}\,dx&\leq\int\abs{h}\Bigl(\eta^2\abs{\nabla\partial_\alpha\phi}+2\eta\abs{\nabla\eta}\abs{\partial_\alpha\phi}\Bigr)\,dx\\
&\lesssim\frac{1}{\epsilon}\int_{B_r}\abs{h}^2\,dx+\epsilon\int\eta^2\abs{\nabla\partial_\alpha\phi}^2\,dx+\int\abs{h}\eta\abs{\nabla\eta}\abs{\partial_\alpha\phi}\,dx\\
&\lesssim_r\frac{1}{\epsilon}\int_{B_r}\abs{h}^2\,dx+\epsilon\int\eta^2\abs{\nabla\partial_\alpha\phi}^2\,dx+\int_{B_r}\abs{\partial_\alpha\phi}^2\,dx.
\end{split}
\end{equation}
Also, recalling that $k\geq 4$, by interpolation inequality \eqref{eq:interpol_2} we can easily estimate
\[
\norm{\calX}_{L^2(B_r)}\lesssim\Bigl(\norm{\calA}_{W^{1,\infty}(B_r)}\norm{\phi}_{H^{k-1}(B_r)}+\norm{\calA}_{H^{k-1}(B_r)}\norm{D^2\phi}_{L^\infty(B_r)}\Bigr)\lesssim\norm{\nabla\phi}_{H^{k-1}(B_r)}.
\]
Therefore, we obtain that
\[
(II)\lesssim_r\frac{1}{\epsilon}\norm{\nabla\phi}_{H^{k-1}(B_r)}^2+\frac{1}{\epsilon}\norm{\partial_\alpha \calF}^2_{L^2(B_r)}+\epsilon\int\eta^2\abs{\nabla\partial_\alpha\phi}^2\,dx+\int_{B_r}\abs{\partial_\alpha\phi}^2\,dx.
\]
Finally, consider the terms $(III)$ and $(IV)$. Recall that in \eqref{eq:renorm} we assumed only the $H^{k-1}(\Om)$-norms of $\calA$, $\diva(\calA)$ and $\calb$ to be controlled by 1. This means that we need to integrate by parts in such a way that these terms are differentiated at most $(k-1)$-times. Choose $i\in\{1,2\}$ such that $\partial_\alpha=\partial_\beta\partial_i$, with $\abs{\beta}=k-1$. Then
\begin{align*}
(III)&=-\int\partial_\beta(\calb\nabla\phi)\partial_i\xi\,dx=-\int\Bigl(\partial_\beta(\calb\nabla\phi)-\partial_\beta\calb\nabla\phi\Bigr)\partial_i\xi\,dx-\int(\partial_\beta\calb\nabla\phi)\partial_i\xi\,dx,
\end{align*}
which by \eqref{eq:interpol_1} and \eqref{eq:nabla_xi} gives
\[
(III)\lesssim_r\frac{1}{\epsilon}\norm{\nabla\phi}^2_{H^{k-1}(B_r)}+\epsilon\int\eta^2\abs{\nabla\partial_\alpha\phi}^2\,dx.
\]
Similarly we have that
\begin{align*}
(IV)&=\int\partial_i\diva(\partial_\beta\calA\nabla\phi)\xi-\diva(\partial_\beta\calA\partial_i\nabla\phi)\xi\,dx=-\int\diva(\partial_\beta\calA\nabla\phi)\partial_i\xi-\inn{\partial_\beta\calA\partial_i\nabla\phi,\nabla\xi}\,dx\\
&=-\int\tr(\partial_\beta\calA D^2\phi)\partial_i\xi+\diva(\partial_\beta \calA)\cdot\nabla\phi\partial_i\xi-\inn{\partial_\beta\calA\partial_i\nabla\phi,\nabla\xi}\,dx\\
&\leq\int\Bigl(\abs{\tr(\partial_\beta\calA D^2\phi)}+\abs{\partial_\beta\diva(\calA)\cdot\nabla\phi}+\abs{\partial_\beta\calA\partial_i\nabla\phi}\Bigr)\abs{\nabla\xi}\,dx\\
&\lesssim_r\frac{1}{\epsilon}\norm{\nabla\phi}^2_{W^{1,\infty}(B_r)}+\epsilon\int\eta^2\abs{\nabla\partial_\alpha\phi}^2\,dx+\int_{B_r}\abs{\partial_\alpha\phi}^2\,dx\\
&\lesssim_r \frac{1}{\epsilon}\norm{\nabla\phi}^2_{H^{k-1}(B_r)}+\epsilon\int\eta^2\abs{\nabla\partial_\alpha\phi}^2\,dx+\int_{B_r}\abs{\partial_\alpha\phi}^2\,dx.
\end{align*}
Letting $\epsilon=c_r\lambda$, for some small constant $c_r>0$, Equation \eqref{eq:elliptic2} gives
\[
\norm{\partial_\alpha\nabla\phi}_{L^2(B_{r/2})}^2\lesssim_r\frac{1}{\lambda^2}\Bigl(\norm{\nabla\phi}_{H^{k-1}(B_r)}^2+\norm{\calF}_{H^k(B_r)}^2\Bigr),
\]
as wished.
\end{proof}
Now, to obtain a similar estimate on the boundary, we start by treating the flat case.
\begin{proposition}[Rescaled flat boundary estimates]\label{prop:flat_boundary_est}Let  $\Om=B_r^+:=\{x^2>0\}\cap B(0,r)$. Then, 
\begin{equation}\label{eq:boundary_estimates}
\norm{\nabla\phi}_{H^k(B_{r/2}^+)}\lesssim_r\frac{1}{\lambda^{k+1}}\Bigl(\norm{\nabla\phi}_{H^{k-1}(B_r^+)}+\norm{\calF}_{H^k(B_r^+)}\Bigr),
\end{equation}
\end{proposition}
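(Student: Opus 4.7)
The strategy is to reduce the flat boundary estimate to a tangential analog of the interior argument in Proposition \ref{prop:interior_est}, and then recover derivatives normal to the flat boundary $\{x^2=0\}$ algebraically from the equation itself. The crux is that purely tangential derivatives preserve the homogeneous Dirichlet condition, whereas each recovery of a normal derivative costs one factor of $\lambda^{-1}$ coming from inverting $\calA_{22}\ge\lambda$.

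\emph{Step 1 (tangential estimate).} For the multi-index $\alpha=(k,0)$, the function $\partial_1^k\phi$ vanishes on the flat piece of the boundary since $\phi$ does. Picking a cutoff $\eta\in C_c^\infty(B_r)$ with $\eta\equiv 1$ on $B_{r/2}$ and $0\le\eta\le 1$, the test function $\xi:=\eta^2\partial_1^k\phi$ lies in $H^1_0(B_r^+)$, and the computation of Proposition \ref{prop:interior_est} transports verbatim (the boundary contributions in the integrations by parts vanish for exactly the same reason as in the interior case). This yields
\[
\norm{\partial_1^k\nabla\phi}_{L^2(B_{r/2}^+)}\lesssim_r\lambda^{-1}\bigl(\norm{\nabla\phi}_{H^{k-1}(B_r^+)}+\norm{\calF}_{H^k(B_r^+)}\bigr),
\]
controlling in particular both $\partial_1^{k+1}\phi$ and $\partial_1^k\partial_2\phi$.

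\emph{Step 2 (normal derivatives via the equation).} Rewrite \eqref{eq:elliptic} as
\[
\calA_{22}\partial_2^2\phi=\diva(\calF)-(\calb+\diva(\calA))\cdot\nabla\phi-2\calA_{12}\partial_1\partial_2\phi-\calA_{11}\partial_1^2\phi=:F_0,
\]
and divide by $\calA_{22}\ge\lambda$. I then induct on the number $b\in\{2,\ldots,k+1\}$ of normal derivatives, for pairs with $a+b=k+1$. For each such pair, differentiating $\partial_2^2\phi=\calA_{22}^{-1}F_0$ by $\partial_1^a\partial_2^{b-2}$ and expanding via Leibniz expresses $\partial_1^a\partial_2^b\phi$ as a sum of products of derivatives of $\calA_{22}^{-1}$ and of $F_0$; by construction each derivative of $F_0$ involves $\phi$ with strictly fewer than $b$ normal derivatives, so the inductive hypothesis (or Step 1 when $b=2$) applies. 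The interpolation inequalities \eqref{eq:banach_algebra}--\eqref{eq:interpol_2} and the Sobolev embedding $H^{k-1}\hookrightarrow W^{k-3,\infty}$, valid thanks to the assumption $k\ge 4$, allow one to bound each such product without losing regularity on the coefficients.

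The main obstacle is the careful accounting of the $\lambda^{-1}$ factors accumulated along the induction. The worst commutator arises when all $a+b-2$ derivatives in Leibniz fall on $\calA_{22}^{-1}$: by Fa\`a di Bruno, each such derivative brings an additional factor of $\calA_{22}^{-1}$, so the leading power after the full induction from $b=2$ up to $b=k+1$ is $\lambda^{-(k+1)}$. Combining with the base factor $\lambda^{-1}$ from Step 1 (which is subsumed) produces the prefactor $\lambda^{-(k+1)}$ claimed in \eqref{eq:boundary_estimates}. The remainder of the argument is a routine summation of all commutator contributions, each bounded in the same fashion by $\norm{\nabla\phi}_{H^{k-1}(B_r^+)}+\norm{\calF}_{H^k(B_r^+)}$ multiplied by a polynomial in $\lambda^{-1}$ of degree at most $k+1$.
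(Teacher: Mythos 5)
Your overall architecture is the same as the paper's: tangential derivatives $\partial_1^k$ preserve the Dirichlet condition on the flat part of the boundary, so the interior energy argument applies verbatim, and the missing normal derivatives are then recovered from the equation by induction, paying in powers of $\lambda^{-1}$ through $\calA_{22}\geq\lambda$. Step 1 is fine. The gap is in the $\lambda$-accounting of Step 2, and it is caused precisely by the point where you deviate from the natural implementation: you divide by $\calA_{22}$ \emph{before} differentiating, writing $\partial_2^2\phi=\calA_{22}^{-1}F_0$ and then applying $\partial_1^a\partial_2^{b-2}$. This forces you to differentiate $\calA_{22}^{-1}$ up to $k-1$ times, and by the very Fa\`a di Bruno expansion you invoke, the worst term $\calA_{22}^{-k}(\partial\calA_{22})^{k-1}$ costs $\lambda^{-k}$ \emph{already at the first level} $b=2$ (recall that in the rescaled setting only $\norm{\calA}_{H^{k-1}}\leq 1$ is available, so nothing compensates these inverse powers). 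Chaining the induction then makes things worse: at level $b$ the Leibniz term with no derivatives on $\calA_{22}^{-1}$ contains $\calA_{12}\partial_1^{a+1}\partial_2^{b-1}\phi$, which costs $\lambda^{-1}$ times the bound at level $b-1$, so the recursion for the exponent is $N_b=\max\{k,\,1+N_{b-1}\}$ with $N_1=1$, giving $N_{k+1}\sim 2k-1$, not $k+1$. Your sentence asserting that ``the leading power after the full induction is $\lambda^{-(k+1)}$'' is exactly the delicate point and does not follow from the scheme as you set it up; in fact it is contradicted by your own description of the Fa\`a di Bruno terms. Since the statement being proved fixes the exponent $k+1$ (and the paper's final time of existence $t^*$ depends on these exponents), this is a genuine gap rather than a cosmetic one.

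The repair is to do what the paper does: differentiate the equation $\tr(\calA D^2\phi)+(\calb+\diva(\calA))\cdot\nabla\phi=\diva(\calF)$ by $\partial_{\gamma}$ with $\gamma=(a,b-2)$, keep $\calA_{22}\,\partial_1^a\partial_2^{b}\phi$ isolated on the left (never inverting $\calA_{22}$ pointwise), bound all remaining terms in $L^2$ via the commutator inequalities \eqref{eq:banach_algebra} and \eqref{eq:interpol_1} together with the inductive hypothesis on the mixed derivatives $\partial_{ij}\partial_{\gamma}\phi$ with $(i,j)\neq(2,2)$, and only at the end use $\calA_{22}\geq\lambda$ once, so that each recovered normal derivative costs exactly one factor $\lambda^{-1}$. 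With that modification your induction closes with the stated prefactor $\lambda^{-(k+1)}$; as written, it does not.
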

\begin{proof}
We first start by estimate the norm of the tangential derivatives. Fix $k\geq 4$.  Let $\eta\in C^\infty_c(\R^2)$ be a cutoff function such that $\eta\mid_{B_{r/2}}=1$, $\eta\mid_{\R^2\setminus B_r}=0$ and $0\leq\eta\leq 1$. Then, since the test function $\xi:=\eta^2\partial_1^k\phi$ vanishes on $\partial B_r^+$ (recall that we prescribed $\phi=0$ on the segment $(-r,r)\times\{0\}$), we can repeat the proof of Proposition \ref{prop:interior_est} for $\alpha=(k,0)$ obtaining the estimate
\[
\norm{\nabla\partial_\alpha\phi}_{L^2(B_{r/2}^+)}^2=\norm{\nabla\partial_1^k\phi}_{L^2(B_{r/2}^+)}^2\lesssim_r \frac{1}{\lambda^2}\Bigl(\norm{\nabla\phi}_{H^{k-1}(B_r^+)}^2+\norm{\calF}_{H^k(B_r^+)}^2\Bigr).
\]
We now show that for all multi-index $\alpha=\alpha_l:=(k-l,l)$ and $l=0,\dots k$, we can estimate
\[
\norm{\nabla\partial_{\alpha_l}\phi}_{L^2(B_{r/2}^+)}\lesssim_r \frac{1}{\lambda^{2(l+1)}}\Bigl(\norm{\nabla\phi}_{H^{k-1}(B_r^+)}^2+\norm{\calF}_{H^k(B_r^+)}^2\Bigr).
\]
 We proceed by induction over $l$: we have already treated the case $l=0$. Then, suppose the claim true for all $0\leq l'\leq l$, for some fixed $1\leq l<k$. We have to check the case with $\alpha_{l+1}=(k-(l+1),l+1)$. Let $\gamma_1=(k-l,l-1)$ and $\gamma_2=(k-(l+1),l)$, so that
\[
\nabla\partial_{\alpha_{l+1}}=\partial_{22}(\partial_{\gamma_1},\partial_{\gamma_2}).
\]
We will now take advantage of Equation \eqref{eq:elliptic}: after differentiation and suitable rearrangement, for $s=1,2$ we have that
\[
\tr(\calA\partial_{\gamma_s} D^2\phi)=\Bigl(\tr(\calA\partial_{\gamma_s} D^2\phi)-\partial_{\gamma_s}\tr(\calA D^2\phi)\Bigr)-\partial_{\gamma_s}\Bigl(\bigl(\calb+\diva(\calA)\bigr)\cdot\nabla\phi\Bigr)-\diva(\partial_{\gamma_s} \calF),
\]
which, developing the trace, becomes
\begin{align*}
\calA_{22}\partial_{22}\partial_{\gamma_s}\phi&=\Bigl(\tr(\calA\partial_{\gamma_s} D^2\phi)-\partial_{\gamma_s}\tr(\calA D^2\phi)\Bigr)-\partial_{\gamma_s}\Bigl(\bigl(\calb+\diva(\calA)\bigr)\cdot\nabla\phi\Bigr)-\diva(\partial_{\gamma_s}\calF)\\
&\quad-\sum_{(i,j)\neq(2,2)}\calA_{ij}\partial_{ij}\partial_{\gamma_s}\phi.
\end{align*}
Since $\calA$ is elliptic, then the coefficient $\calA_{22}$ is controlled uniformly from below by the elliptic constant $\lambda$. Therefore, applying the $L^2$ norm over $B_{r/2}^+$ on both sides, and taking advantage of interpolation inequalities \eqref{eq:banach_algebra} and \eqref{eq:interpol_1} we obtain the estimate
\begin{align*}
\lambda\norm{\nabla\partial_{\alpha_{l+1}}\phi}_{L^2(B_{r/2}^+)}&\lesssim_r \norm{\calA}_{W^{1,\infty}(B_{r/2}^+)}\norm{D^2\phi}_{H^{k-2}(B_{r/2}^+)}+\norm{\calA}_{H^{k-1}(B_{r/2}^+)}\norm{D^2\phi}_{L^\infty(B_{r/2}^+)}\\
&\quad+\norm{\calb+\diva(\calA)}_{L^{\infty}(B_{r/2}^+)}\norm{\nabla\phi}_{H^{k-1}(B_{r/2}^+)}+\norm{\calb+\diva(\calA)}_{H^{k-1}(B_{r/2}^+)}\norm{\nabla\phi}_{L^\infty(B_{r/2}^+)}\\
&\quad+\norm{\calF}_{H^k(B_{r/2}^+)}+\norm{\calA}_{L^\infty(B_{r/2}^+)}\sum_{(i,j)\neq(2,2)}\Bigl(\norm{\partial_{ij}\partial_{\gamma_1}\phi}_{L^2(B_{r/2}^+)}+\norm{\partial_{ij}\partial_{\gamma_2}\phi}_{L^2(B_{r/2}^+)}\Bigr)\\
&\lesssim_r\norm{\nabla\phi}_{H^{k-1}(B_{r/2}^+)}+\norm{\calF}_{H^k(B_{r/2}^+)}+\sum_{(i,j)\neq(2,2)}\Bigl(\norm{\partial_{ij}\partial_{\gamma_1}\phi}_{L^2(B_{r/2}^+)}+\norm{\partial_{ij}\partial_{\gamma_2}\phi}_{L^2(B_{r/2}^+)}\Bigr).
\end{align*}
Now, since
\[
\nabla\partial_{\alpha_{l-1}}=(\partial_{11}\partial_{\gamma_1},\partial_{12}\partial_{\gamma_1}),
\]
and
\[
\nabla\partial_{\alpha_l}=(\partial_{11}\partial_{\gamma_2},\partial_{12}\partial_{\gamma_2}),
\]
we can control
\[
\sum_{(i,j)\neq(2,2)}\Bigl(\norm{\partial_{ij}\partial_{\gamma_1}\phi}_{L^2(B_{r/2}^+)}+\norm{\partial_{ij}\partial_{\gamma_2}\phi}_{L^2(B_{r/2}^+)}\Bigr)\leq 3\Bigl(\norm{\nabla\partial_{\alpha_l}\phi}_{L^2(B_{r/2}^+)}+\norm{\nabla\partial_{\alpha_{l-1}}\phi}_{L^2(B_{r/2}^+)}\Bigr),
\]
which by induction gives
\[
\lambda\norm{\nabla\partial_{\alpha_{l+1}}\phi}_{L^2(B_{r/2}^+)}\lesssim_r \Bigl
(1+\frac{1}{\lambda^{l}}+\frac{1}{\lambda^{l+1}}\Bigr)\bigl(\norm{\nabla\phi}_{H^{k-1}(B_r)}+\norm{\calF}_{H^k(B_r)}\bigr),
\]
and hence
\[
\norm{\nabla\partial_{\alpha_{l+1}}\phi}_{L^2(B_{r/2}^+)}\lesssim_r \frac{1}{\lambda^{l+2}}\Bigl(\norm{\nabla\phi}_{H^{k-1}(B_r)}+\norm{\calF}_{H^k(B_r)}\Bigr),
\]
as wished, completing the induction.
\end{proof}
Now we prove that we can recover the same estimate for a domain with curved boundary.
\begin{proposition}[Rescaled curved boundary estimates]\label{prop:curved_estimate}Let $\Om\subset\R^2$ be any open domain with boundary of class $C^{k+1}$. Choose $x_0\in\partial\Om$ and $r>0$ sufficiently small such that there exists a $C^{k+1}$-diffeomorphism 
\[
\Phi: B(x_0,r)\cap\Om\to B_r^+,
\]
with inverse $\Psi=\Phi^{-1}$, such that $\Phi(\partial\Om\cap B(x_0,r))=(-r,r)\times\{0\}$, and $\det(D\Phi)=1$. Then, calling $U_r^+:=B(x_0,r)\cap\Om$, we have that there exists $C_{\Phi}>0$ such that
\begin{equation}\label{eq:curved_boundary_estimates}
\norm{\nabla\phi}_{H^k(U_{r/2}^+)}\lesssim_r\frac{C_\Phi}{\lambda^{k+1}}\Bigl(\norm{\nabla\phi}_{H^{k-1}(U_r^+)}+\norm{\calF}_{H^k(U_r^+)}\Bigr).
\end{equation}
\end{proposition}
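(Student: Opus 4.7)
The plan is to straighten the boundary by the diffeomorphism $\Phi$, transform the equation into an elliptic equation of the same structure on the half-ball $B_r^+$, apply Proposition \ref{prop:flat_boundary_est}, and then pull the estimate back. The condition $\det(D\Phi)=1$ is exactly what makes the transformed equation again of divergence form without spurious Jacobian weights.

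Concretely, I would set $\tilde\phi(y):=\phi(\Psi(y))$ on $B_r^+$. Since $\Phi$ sends $\partial\Om\cap B(x_0,r)$ onto $(-r,r)\times\{0\}$ and $\phi$ vanishes on $\partial\Om$, $\tilde\phi$ vanishes on the flat portion of $\partial B_r^+$. Using $(\nabla_x\phi)\circ\Psi=D\Phi^T(\Psi)\nabla_y\tilde\phi$, the same identity for test functions, and the change of variables formula with $\abs{\det D\Psi}=1$, the weak form of \eqref{eq:elliptic} is transported to
\[
\diva_y(\tilde\calA\nabla_y\tilde\phi)+\tilde\calb\cdot\nabla_y\tilde\phi=\diva_y(\tilde\calF)\quad\text{in }B_r^+,
\]
with $\tilde\calA(y)=(D\Phi\,\calA\, D\Phi^T)(\Psi(y))$, $\tilde\calb(y)=(D\Phi\,\calb)(\Psi(y))$, and $\tilde\calF(y)=(D\Phi\,\calF)(\Psi(y))$. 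Ellipticity transfers with
\[
\tilde\calA\geq\lambda\,\sigma_{\min}(D\Phi)^2\,\I\geq \frac{\lambda}{C_\Phi^2}\I,
\]
where $C_\Phi$ depends on the $C^1$ norm of $\Phi$ and $\Psi$.

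Next I would check that the rescaling hypotheses on the transformed data hold up to a multiplicative $C_\Phi$. Since $\Phi\in C^{k+1}$, the chain-rule/Faà di Bruno calculations, together with the algebra inequality \eqref{eq:banach_algebra}, show that
\[
\norm{\tilde\calA}_{H^{k-1}(B_r^+)}+\norm{\diva_y\tilde\calA}_{H^{k-1}(B_r^+)}+\norm{\tilde\calb}_{H^{k-1}(B_r^+)}\leq C_\Phi\Bigl(\norm{\calA}_{H^{k-1}(U_r^+)}+\norm{\diva\calA}_{H^{k-1}(U_r^+)}+\norm{\calb}_{H^{k-1}(U_r^+)}\Bigr),
\]
and similarly $\norm{\tilde\calF}_{H^k(B_r^+)}\leq C_\Phi\norm{\calF}_{H^k(U_r^+)}$. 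Under the standing normalization \eqref{eq:renorm} this gives $C_\Phi$-bounded coefficients; the proof of Proposition \ref{prop:flat_boundary_est} then applies verbatim, with every appearance of the coefficient bound $1$ replaced by $C_\Phi$, yielding
\[
\norm{\nabla_y\tilde\phi}_{H^k(B_{r/2}^+)}\lesssim_r \frac{C_\Phi'}{\lambda^{k+1}}\Bigl(\norm{\nabla_y\tilde\phi}_{H^{k-1}(B_r^+)}+\norm{\tilde\calF}_{H^k(B_r^+)}\Bigr).
\]

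Finally, pulling back via $\Phi$, the Sobolev norms of $\nabla_y\tilde\phi$ and $\nabla\phi$ are comparable up to a factor depending only on the $C^k$-norm of $\Phi$ and $\Psi$, so collecting all dependencies on the diffeomorphism into a single (larger) constant $C_\Phi$ yields \eqref{eq:curved_boundary_estimates}. The main obstacle is the bookkeeping: verifying that the derivatives of the transformed coefficients, in particular $\diva_y\tilde\calA$, still lie in $H^{k-1}$ with controlled norms requires $\Phi$ to be $C^{k+1}$ (one derivative above the regularity of $\calA$), which explains the hypothesis on $\partial\Om$, and the preservation of divergence form without picking up a zero-order term in $\tilde\phi$ is what forces the normalization $\det(D\Phi)=1$.
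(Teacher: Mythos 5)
Your proposal is correct and follows essentially the same route as the paper: transplant the equation to $B_r^+$ with $\tilde\calA=(D\Phi\,\calA\,D\Phi^T)\circ\Psi$, $\tilde\calb=(D\Phi\,\calb)\circ\Psi$, $\tilde\calF=(D\Phi\,\calF)\circ\Psi$, verify that $\diva_y\tilde\calA$ is controlled in $H^{k-1}$ by $\norm{\calA}_{H^{k-1}}+\norm{\diva(\calA)}_{H^{k-1}}$ (the paper carries out this chain-rule cancellation explicitly, since a naive bound through $D\calA$ alone would lose a derivative), and then invoke Proposition \ref{prop:flat_boundary_est}. Your explicit tracking of the ellipticity degradation $\tilde\calA\geq(\lambda/C_\Phi^2)\I$ is in fact slightly more careful than the paper's statement $\lambda\I\leq\calA'$, and is harmlessly absorbed into $C_\Phi$.
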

\begin{proof}
One can check directly that
\begin{align*}
\phi'(y)&:=\phi(\Psi(y)),\\
\calA'(y)_{rs}&:=\sum_{ij}\calA(\Psi(y))_{ij}\partial_{x_i}\Phi^r(\Psi(y))\partial_{x_j}\Phi^s(\Psi(y)),\\
\calb'(y)^{r}&:=\sum_{i}\calb(\Psi(y))^i\partial_{x_i}\Phi^r(\Psi(y)),\\
\calF'(y)^{r}&:=\sum_{i}\calF^i(\Psi(y))\partial_{x_i}\Phi^r(\Psi(y)),
\end{align*}
solves
\[
\begin{cases}
\diva(\calA'\nabla\phi')+\calb'\cdot\nabla\phi'=\diva(\calF'),\,&\text{in }B_r^+,\\
\phi'=0,\,&\text{on }(-r,r)\times\{0\}.
\end{cases}
\]
and $\lambda\I\leq\calA'$. We have to compute $\diva(\calA')(y)^s=\sum_{r}\partial_{y_r}\calA'(y)_{rs}$ in terms of $\calA$ and $\diva(\calA)$. Now,
\begin{align*}
\diva(\calA')(y)^s&=\sum_{r}\partial_{y_r}\calA'(y)_{rs}=\sum_{r}\partial_{y_r}\Bigl(\sum_{ij}\calA(\Psi(y))_{ij}\partial_{x_i}\Phi^r(\Psi(y))\partial_{x_j}\Phi^s(\Psi(y))\Bigr)\\
&=\sum_{ijrs}\partial_{x_s}\calA(\Psi(y))_{ij}\partial_{y_r}\Psi^s(y)\partial_{x_i}\Phi^r(\Psi(y))\partial_{x_j}\Phi^s(\Psi(y))\\
&\quad+\sum_{ijr}\calA(\Psi(y))_{ij}\partial_{y_r}\Bigl(\partial_{x_i}\Phi^r(\Psi(y))\partial_{x_j}\Phi^s(\Psi(y))\Bigr),
\end{align*}
and since $\sum_r\partial_{y_r}\Psi^s(y)\partial_{x_i}\Phi^r(\Psi(y))=\delta_{si}$, it follows that
\begin{align*}
\diva(\calA')(y)^s&=\sum_{ijs}\partial_{x_i}\calA(\Psi(y))_{ij}\partial_{x_j}\Phi^s(\Psi(y))+\sum_{ijr}\calA(\Psi(y))_{ij}\partial_{y_r}\Bigl(\partial_{x_i}\Phi^r(\Psi(y))\partial_{x_j}\Phi^s(\Psi(y))\Bigr)\\
&=\sum_{js}\diva(\calA)(\Psi(y))^{j}\partial_{x_j}\Phi^s(\Psi(y))+\sum_{ijr}
\calA(\Psi(y))_{ij}\partial_{y_r}\Bigl(\partial_{x_i}\Phi^r(\Psi(y))\partial_{x_j}\Phi^s(\Psi(y))\Bigr).
\end{align*}
Therefore, there exists $C_\Phi>0$ such that
\[
\norm{\diva(\calA')}_{H^{k-1}(B_r^+)}\leq C_\Phi\Bigl(\norm{\calA}_{H^{k-1}(U^+_r)}+\norm{\diva(\calA)}_{H^{k-1}(U^+_r)}\Bigr).
\]
It suffices to apply Proposition \ref{prop:flat_boundary_est} in order to complete the proof.
\end{proof}
By covering $\Om$ with sufficiently small balls, we can prove a global estimate for the rescaled elliptic equation.
\begin{proposition}[Rescaled global estimates]\label{prop:rescaled_global_estimates}
There exists $C_{\Om,k}>0$ such that
\begin{equation}\label{eq:rescaled_global_est}
\norm{\nabla\phi}_{H^k(\Om)}\leq C_{\Om,k}\Bigl(\lambda^{-(k+1)k}\norm{\nabla\phi}_{L^2(\Om)}+\lambda^{-(k+1)}\norm{\calF}_{H^k(\Om)}\Bigr).
\end{equation}
Moreover, if there exists $\calf\in C^\infty(\bar\Om)$ such that $\calb=\nabla^\perp\calf$, then
\begin{equation}\label{eq:rescaled_global_est2}
\norm{\nabla\phi}_{H^k(\Om)}\leq C_{\Om,k}\lambda^{-(k+1)k-1}\norm{\calF}_{H^k(\Om)}.
\end{equation}
\end{proposition}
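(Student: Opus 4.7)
The plan is to patch the local estimates of Propositions \ref{prop:interior_est} and \ref{prop:curved_estimate} into a global one, then remove the intermediate $H^{k-1}$ term on the right-hand side by interpolation, and finally handle the special case $\calb=\nabla^\perp\calf$ with an elementary energy estimate on the equation itself.

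First I would fix a finite cover of $\overline{\Om}$ by balls $\{B(x_i,r_i/2)\}_{i\in I_1}\cup\{B(x_j,r_j/2)\}_{j\in I_2}$, with the first family centered at interior points with $B(x_i,r_i)\subset\Om$ and the second centered at boundary points with the straightening diffeomorphism of Proposition \ref{prop:curved_estimate} available on $B(x_j,r_j)\cap\Om$. Summing \eqref{eq:interior_estimate} and \eqref{eq:curved_boundary_estimates} over this cover, and using that the $C_\Phi$ produced by the straightening maps depends only on $\Om$ (since $\partial\Om$ is $C^{k+1}$) and hence can be absorbed in the implicit constant $C_{\Om,k}$, gives
\[
\norm{\nabla\phi}_{H^k(\Om)}\le C_{\Om,k}\,\lambda^{-(k+1)}\Bigl(\norm{\nabla\phi}_{H^{k-1}(\Om)}+\norm{\calF}_{H^k(\Om)}\Bigr).
\]

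Next I would remove the $H^{k-1}$-norm on the right. By the interpolation inequality of Lemma \ref{thm:Adams} applied to $\nabla\phi$ with $m=k$, together with Young's inequality, for every $\epsilon\in(0,1)$ one gets
\[
\norm{\nabla\phi}_{H^{k-1}(\Om)}\le \epsilon\norm{\nabla\phi}_{H^k(\Om)}+C\,\epsilon^{-(k-1)}\norm{\nabla\phi}_{L^2(\Om)}.
\]
Choosing $\epsilon=c\,\lambda^{k+1}$ with $c=c(\Om,k)$ sufficiently small allows one to absorb the $\norm{\nabla\phi}_{H^k}$ term on the left, and what remains has a coefficient $\lambda^{-(k+1)}\cdot \lambda^{-(k+1)(k-1)}=\lambda^{-(k+1)k}$ in front of $\norm{\nabla\phi}_{L^2(\Om)}$, which is exactly \eqref{eq:rescaled_global_est}.

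Finally, for the improved estimate \eqref{eq:rescaled_global_est2} under $\calb=\nabla^\perp\calf$, I would test \eqref{eq:elliptic} directly against $\phi$. The key observation is that, since $\phi$ vanishes on $\partial\Om$ and $\diva(\nabla^\perp\calf)=0$,
\[
\int_\Om(\calb\cdot\nabla\phi)\phi\,dx=\tfrac{1}{2}\int_\Om\nabla^\perp\calf\cdot\nabla(\phi^2)\,dx=0,
\]
so the ellipticity of $\calA$ yields $\lambda\norm{\nabla\phi}_{L^2(\Om)}^2\le\norm{\calF}_{L^2(\Om)}\norm{\nabla\phi}_{L^2(\Om)}$, hence $\norm{\nabla\phi}_{L^2(\Om)}\le\lambda^{-1}\norm{\calF}_{L^2(\Om)}$. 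Substituting this into \eqref{eq:rescaled_global_est} and using $\lambda\le 1$ to absorb the $\lambda^{-(k+1)}\norm{\calF}_{H^k}$ term into $\lambda^{-(k+1)k-1}\norm{\calF}_{H^k}$ gives \eqref{eq:rescaled_global_est2}.

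The only real obstacle is bookkeeping the exponent in the first step: one must make sure that the straightening on boundary patches only costs a factor independent of $\lambda$ (so that the exponent remains $k+1$ rather than something worse), and that the $\epsilon$-absorption is carried out \emph{after} the global patching, not patch by patch, since otherwise the interpolation on each small ball would reintroduce $r$-dependent constants one cannot track uniformly. Everything else is routine.
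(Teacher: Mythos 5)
Your proposal is correct and follows essentially the same route as the paper: patch the interior and (straightened) boundary estimates into a global $H^k$ bound with the $\lambda^{-(k+1)}$ factor, remove the intermediate $\norm{\nabla\phi}_{H^{k-1}}$ via the interpolation Lemma \ref{thm:Adams}, and handle $\calb=\nabla^\perp\calf$ by testing \eqref{eq:elliptic} against $\phi$ and using $\lambda\le1$. The only cosmetic difference is that you remove the $H^{k-1}$ term by Young's inequality plus absorption, whereas the paper splits into the two cases $\norm{\nabla\phi}_{H^{k-1}}\le\norm{\calF}_{H^k}$ or not and back-substitutes the interpolation bound, both yielding the same exponents $\lambda^{-(k+1)k}$ and $\lambda^{-(k+1)}$.
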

\begin{proof}
Covering $\Om$ by sufficiently many balls, combining Propositions \ref{prop:interior_est} and \ref{prop:curved_estimate}, it follows that for any $k\geq 4$ there exists $C_{k,\Om}>0$ such that
\begin{equation*}
\norm{\nabla\phi}_{H^k(\Om)}\leq\frac{C_{k,\Om}}{\lambda^{k+1}}\Bigl(\norm{\nabla\phi}_{H^{k-1}(\Om)}+\norm{\calF}_{H^k(\Om)}\Bigr).
\end{equation*}
We distinguish two cases: if $\norm{\nabla\phi}_{H^{k-1}(\Om)}\leq\norm{\calF}_{H^k(\Om)}$, then
\begin{equation*}
\norm{\nabla\phi}_{H^k(\Om)}\leq\frac{2C_{k,\Om}}{\lambda^{k+1}}\norm{\calF}_{H^k(\Om)},
\end{equation*}
and we are done. Otherwise, since
\begin{equation*}
\norm{\nabla\phi}_{H^k(\Om)}\leq\frac{2C_{k,\Om}}{\lambda^{k+1}}\norm{\nabla\phi}_{H^{k-1}(\Om)},
\end{equation*}
the interpolation inequality of Theorem \ref{thm:Adams} implies that there exist $C'_{k,\Om}>0$ such that
\[
\norm{\nabla\phi}_{H^{k-1}(\Om)}\leq C'_{k,\Om}\norm{\nabla\phi}_{L^2(\Om)}^{1-\frac{k-1}{k}}\Bigl(\frac{1}{\lambda^{k+1}}\norm{\nabla\phi}_{H^{k-1}(\Om)}\Bigr)^{\frac{k-1}{k}},
\]
and hence
\[
\norm{\nabla\phi}_{H^{k-1}(\Om)}\leq ({C'}_{k,\Om})^{k}\norm{\nabla\phi}_{L^2(\Om)}\lambda^{-(k+1)(k-1)}.
\]
Finally, in both cases we have proven that there exists $C''_{k,\Om}>0$ such that
\[
\norm{\nabla\phi}_{H^k(\Om)}\leq C''_{k,\Om}\Bigl(\lambda^{-(k+1)k}\norm{\nabla\phi}_{L^2(\Om)}+\lambda^{-(k+1)}\norm{\calF}_{H^k(\Om)}\Bigr).
\]
If $\calb=\nabla^\perp\calf$, then one can get rid of the $L^2$-norm of $\nabla\phi$ simply testing \eqref{eq:elliptic} against $\phi$ and computing
\begin{align*}
\lambda\norm{\nabla\phi}^2_{L^2(\Om)}&\leq \int(\nabla^\perp\calf\cdot\nabla\phi)\phi\,dx+\int\calF\cdot\nabla\phi\,dx\\
&=\frac{1}{2}\int\nabla^\perp\calf\cdot\nabla(\phi^2)\,dx+\norm{\calF}_{L^2(\Om)}\norm{\nabla\phi}_{L^2(\Om)}\\
&=\frac{1}{2}\int_{\partial\Om}\phi^2(\nabla^\perp\calf\cdot\nu)\,dx-\frac{1}{2}\int\diva(\nabla^\perp\calf)\phi^2\,dx+\norm{\calF}_{L^2(\Om)}\norm{\nabla\phi}_{L^2(\Om)}\\
&=\norm{\calF}_{L^2(\Om)}\norm{\nabla\phi}_{L^2(\Om)}.
\end{align*}
Hence, plugging $\lambda\norm{\nabla\phi}_{L^2(\Om)}\leq\norm{\calF}_{L^2(\Om)}$ in \eqref{eq:rescaled_global_est} we finally obtain that
\[
\norm{\nabla\phi}_{H^k(\Om)}\leq C_{k,\Om}\Bigl(\lambda^{-(k+1)k-1}\norm{\calF}_{L^2(\Om)}+\lambda^{-(k+1)}\norm{\calF}_{H^k(\Om)}\Bigr),
\]
finishing the proof of the proposition (recall that  by  hypothesis $\lambda\leq 1$).
\end{proof}
We can now easily prove the main result of this section.
\begin{proof}[Proof of Proposition \ref{prop:global_estimates}]Renormalizing Equation \eqref{eq:elliptic} by dividing both sides by $\mathbf{M}$ we obtain, applying Proposition \eqref{prop:rescaled_global_estimates}, that
\[
\norm{\nabla\phi}_{H^k(\Om)}\leq C_{k,\Om}\Bigl(\Bigl(\frac{\lambda}{\mathbf{M}}\Bigr)^{-(k+1)k}\norm{\nabla\phi}_{L^2(\Om)}+\Bigl(\frac{\lambda}{\mathbf{M}}\Bigr)^{-(k+1)}\norm*{\frac{\calF}{\mathbf{M}}}_{H^k(\Om)}\Bigr),
\]
which gives \eqref{eq:GLOBAL_EST1}. The same shows \eqref{eq:GLOBAL_EST2}.
\end{proof}

\section{Local-in-time existence of smooth solutions in Eulerian coordinates}\label{sec:3}
\subsection{Flattening}

We would like to look at \eqref{eq:SG} as a perturbation of the semigeostrophic equation on the flat plane.
Since $g$ is conformal, we know that
\[
\nabla_g h=e^{2V}\nabla h,\quad\text{for all } h\in C^1(\Om),
\]
and
\begin{equation}\label{eq:flat_covariant}
D^g_XY=(X\cdot\nabla)Y-dV(X)Y-dV(Y)X+\inn{X,Y}\nabla V=D^gY\cdot X,\quad \text{ for all }X,Y\in C^1(\Om,\R^2),
\end{equation}
where $D^gY:=DY-Y\otimes\nabla V+\nabla V\otimes Y-\inn{Y,\nabla V}\I$. Since by hypothesis $u$ is divergence free, tangent to $\partial\Om$ and $\Om$ is simply connected, we can suppose that there exists some  potential $\psi$ such that
\[
u=-\nabla_g^\perp\psi=-e^{2V}\nabla^\perp \psi=:e^{2V}v,\text{ and }\psi\mid_{\partial\Om}=0.
\]
Converting all curved gradients into flat ones, substituting $u$ with $e^{2V}v$ and multiplying Equation \eqref{eq:SG} by $e^{-\varphi-2V}$ we obtain that
\begin{equation}\label{eq:SG2}
\partial_t\nabla^\perp p+e^{-\varphi}D^g_v(e^{\varphi+2V}\nabla^\perp p)+e^{-2\varphi}v^\perp+e^{-\varphi}\nabla p=0.
\end{equation}
Thanks to Equation \eqref{eq:flat_covariant} we can write
\begin{align*}
e^{-\varphi}D^g_v(e^{\varphi+2V}\nabla^\perp p)&=e^{-\varphi}J\bigl(e^{\varphi+2V}D_v^g\nabla p+e^{\varphi+2V}\inn{\nabla\varphi+2\nabla V,v}\nabla p\bigr)\\
&=e^{2V}J\bigl(D_g\nabla p+\nabla p\otimes(\nabla\varphi+2\nabla V)\bigr)v\\
&=e^{2V}\cof(D_g\nabla p+\nabla p\otimes(\nabla\varphi+2\nabla V))\nabla\psi\\
&=e^{2V}\cof(D^2p+\calB[\nabla p]),
\end{align*}
where with $\cof(\cdot)$ we denote the cofactor matrix, which in two dimensions is simply given by 
\[
\cof(M):=-JMJ,
\]
and for every vector field $\xi$ we set
\begin{align*}
\calB[\xi]&:=\nabla V\otimes\xi+\xi\otimes\nabla V-\inn{\xi,\nabla V}\I+\xi\otimes\nabla \varphi.
\end{align*}
Plugging this in Equation \eqref{eq:SG2} we finally obtain the semigeostrophic equation with flattened operators
\begin{equation}\label{eq:SGflat}
\begin{cases}
\partial_t\nabla^\perp p+e^{2V}\cof(D^2 p+\calB[\nabla p])\nabla \psi+e^{-2\varphi}\nabla\psi=-e^{-\varphi}\nabla p,&\text{ in }\Om,\\
\psi=0,&\text{ on }\partial\Om.
\end{cases}
\end{equation}

\subsection{An elliptic PDE for the velocity vector field}\label{sec_elliptic_pde}

Applying the divergence operator on both sides of  Equation \eqref{eq:SGflat}, we remove the explicit dependencies on the time variable, obtaining
\[
\diva\Bigl(e^{2V}\cof(D^2 p+\calB[\nabla p]+e^{-2\varphi-2V}\I)\nabla\psi\Bigr)=-\diva(e^{-\varphi}\nabla p).
\]
In order to rewrite this as a classical elliptic equation in divergence form,  we decompose $\calB$ into its symmetric and antisymmetric part as
\[
\calB[\xi]=\underbrace{(\nabla V+\nabla\varphi/2)\otimes\xi+\xi\otimes(\nabla V+\nabla\varphi/2)-\inn{\xi,\nabla V}\I}_{=:\calB^s[\xi]}+\underbrace{\frac{1}{2}(\xi\otimes\nabla\varphi-\nabla\varphi\otimes\xi)}_{=:\calB^{as}[\xi]}.
\]
Hence, we obtain the equation
\[
\diva\Bigl(e^{2V}\cof(D^2 p+\calB^s[\nabla p]+e^{-2\varphi-2V}\I)\nabla\psi\Bigr)+\nabla^\perp(e^{2V}\calB^{as}_{12}[\nabla p])\cdot\nabla \psi=-\diva(e^{-2\varphi}\nabla p),
\]
(see Remark \ref{rmk:symmetrization}). Finally, to simplify the exposition, define
\begin{align*}
\calQ[D\xi,\xi]&:=e^{2V+2\varphi}\cof(D\xi^T+\calB^s[\xi]),\\
\calf[\xi]&:=e^{2V} \calB^{as}_{12}[\xi],\\
\calF[\xi]&:=-e^{-\varphi}\xi,
\end{align*}
so that we can rewrite the equation as
\begin{equation}\label{eq:elliptic_u}
\begin{cases}
\diva(e^{-2\varphi}(\I+\calQ[D^2p,\nabla p])\nabla\psi)+\nabla^\perp(\calf[\nabla p])\cdot\nabla \psi=\diva(\calF[\nabla p]),&\text{ in }\Om\\
\psi=0,&\text{ on }\partial\Om.
\end{cases}
\end{equation}
Notice that in the definition of $\calQ$ we decided to transpose the matrix $D\xi$. This has clearly no effect when $D\xi=D^2p$, but it will be important to obtain the suitable cancellation of terms in the following useful lemma.
\begin{lemma}[Basic estimates on the coefficients]\label{lem:basic_est}For every $k\geq 0$ there exists a constant $C=C(\varphi,V,k)>0$ such that for every smooth vector field $\xi$ on $\Om$ the following estimates hold:
\begin{align*}
\norm{\calB[\xi]}_{H^k(\Om)}+\norm{\calF[\xi]}_{H^k(\Om)}+\norm{\calf[\xi]}_{H^k(\Om)}&\leq C\norm{\xi}_{H^k(\Om)},\\
\norm{\calQ[D\xi,\xi]}_{H^k(\Om)}+\norm{\diva(\calQ[D\xi,\xi])}_{H^k(\Om)}&\leq C\norm{\xi}_{H^{k+1}(\Om)}.\\
\end{align*}
\end{lemma}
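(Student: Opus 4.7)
The plan splits along the structure of the lemma: first dispatch the three routine bounds, then exploit a Piola-type cancellation to recover the promised one-derivative gain for $\diva \calQ$.

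For $\calB[\xi]$, $\calF[\xi]$ and $\calf[\xi]$ I would simply observe that, since $V,\varphi\in C^\infty(\bar\Om)$, each of these tensors is a linear combination of products between the components of $\xi$ and fixed smooth bounded functions on $\bar\Om$ (namely $e^{-\varphi}$, $e^{2V}$, $\nabla V$, $\nabla\varphi$ and their powers). The Banach algebra estimate \eqref{eq:banach_algebra}, applied iteratively and with the $W^{k,\infty}(\Om)$-norms of the smooth prefactors absorbed into a constant $C=C(V,\varphi,k)$, produces the desired $H^k$ bound directly.

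For the $H^k$-bound on $\calQ[D\xi,\xi]=e^{2V+2\varphi}\cof(D\xi^T+\calB^s[\xi])$, the key structural fact is that in two dimensions $\cof(M)=-JMJ$ is \emph{linear} in $M$, so the cofactor distributes over the sum. The first summand $\cof(D\xi^T)$ is a rearrangement of the entries of $D\xi$, hence its $H^k$ norm is controlled by $\norm{\xi}_{H^{k+1}(\Om)}$; the second summand $\cof(\calB^s[\xi])$ is controlled by $\norm{\xi}_{H^k(\Om)}$ exactly as in the previous paragraph. Multiplying by the smooth factor $e^{2V+2\varphi}$ (via \eqref{eq:banach_algebra}) preserves both bounds.

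The main obstacle is the divergence estimate: naively $\diva\calQ$ contains second derivatives of $\xi$, which would yield only $\norm{\xi}_{H^{k+2}(\Om)}$. The gain of one derivative is precisely the "nice cancellation" announced before the lemma, and comes from a Piola-type identity. Recall that for any smooth $F:\Om\to\R^2$ each row of $\cof(DF)$ is divergence free, i.e.\ $\sum_j\partial_j\cof(DF)_{ij}=0$ for $i=1,2$; combined with $\cof(M^T)=(\cof M)^T$ and the author's column-wise convention $\diva(A)^j=\sum_i\partial_iA_{ij}$, this translates into the pointwise identity $\diva(\cof(D\xi^T))\equiv 0$. This is precisely the reason to transpose $D\xi$ in the definition of $\calQ$: the cancellation then holds even when $\xi$ is not a gradient. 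Using the linearity of $\cof$ in two dimensions together with the product rule, I would expand
\[
\diva(\calQ)^j=\sum_i\partial_i\bigl(e^{2V+2\varphi}\bigr)\cof(D\xi^T)_{ij}+\diva\bigl(e^{2V+2\varphi}\cof(\calB^s[\xi])\bigr)^j,
\]
where the would-be top-order term $e^{2V+2\varphi}\sum_i\partial_i\cof(D\xi^T)_{ij}$ has disappeared. Each remaining contribution contains only $\xi$ and $D\xi$ multiplied by smooth functions, so a final application of \eqref{eq:banach_algebra} closes the estimate at the level of $\norm{\xi}_{H^{k+1}(\Om)}$.
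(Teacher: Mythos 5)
Your proposal is correct and follows essentially the same route as the paper: the lower-order bounds are routine Leibniz/product estimates, and the one-derivative gain for $\diva(\calQ)$ rests on the identity $\diva\bigl(\cof(D\xi^T)\bigr)=0$, which the paper checks by direct entry-wise computation and you obtain equivalently from the Piola identity together with $\cof(M^T)=\cof(M)^T$ and the column-wise divergence convention. Your explicit use of the linearity of $\cof$ in two dimensions to isolate $\cof(D\xi^T)$ is exactly the paper's implicit step, so nothing is missing.
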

\begin{proof}
The first four inequalities follow immediately from the definition of $\calB[\xi]$. To check the last one, simply observe that the only problematic term in $\calQ[D\xi,\xi]$ is $\cof(D\xi^T)$. Conclude by noticing that the cofactor matrix of the transpose jacobian matrix enjoys the following nice property
\[
\diva\Bigl(\cof(D\xi^T)\Bigr)=\sum_{i,j}\partial_i(\cof(D\xi^T))_{ij}=\partial^2_{12}\xi^2-\partial^2_{12}\xi^1-\partial^2_{21}\xi^2+\partial^2_{21}\xi^1=0.
\]
\end{proof}

\subsection{Discrete construction and local-in-time uniformly existence of regularized solutions}

Before presenting the algorithm to construct an approximate solution, we need to fix some notation. For all  vector field $X\in H^k(\Om,\R^2)$, consider the unique Helmholtz-Hodge orthogonal decomposition
\[
X=w+\nabla q,
\]
where $\diva(w)=0$. From now on, we denote with 
\[
\hel(X):=\nabla q,
\]
the orthogonal complement of the classical Leray projector. Explicitly, $q$ solves the Neumann-type elliptic problem $\Delta q=\diva(X)$ in $\Om$, $\partial_\nu q=X\cdot\nu$ on $\partial\Om$. With $\mol$ we denote the standard mollification
\[
\mol h:=\eta_\epsilon\ast h,\,\forall h\in L^2(\Om),L^2(\Om,\R^2),L^2(\Om,\R^{2\times 2}),\dots
\]
where $\eta_\epsilon$ is any smooth convolution kernel. We address the reader to \cite[Appendix C]{E15} and \cite[Chapter 4]{MB01} for a brief recall of the principal properties and definitions of $\mol$ and $\hel$. Fix now $k\geq 4$ and suppose we are given $\nabla p_0\in H^k(\Om,\R^2)$ such that
\[
\I+\calQ[D^2 p_0,\nabla p_0]\geq (1-\mu_0)\I>0,
\]
for some $\mu_0<1$. Choose a coefficient of mollification $\epsilon>0$ and a time step $\tau>0$. We set $\nabla p^{0}_0:=\nabla p_0$ and solve for $i=-1,0,1,\dots$ and $s\in[0,\tau]$ the system
\begin{equation}\label{eq:discrete_p}
\begin{cases}
\partial_s\nabla  p_s^{i+1}=\mathcal{F}_{\psi^{i+1}}^\epsilon(\nabla p^{i+1}_s):=\hel\mol\Bigl(e^{2V}(\mol D^2 p^{i+1}_s+\calB[\mol\nabla p^{i+1}_s]+e^{-2\varphi-2V}\I)\nabla^\perp\psi^{i+1}+e^{-\varphi}\mol\nabla^\perp p^{i+1}_s\Bigr),\\
\nabla p_0^{i+1}=\nabla p^i_\tau.
\end{cases}
\end{equation}
where $\psi^{i+1}$ is given by
\begin{equation}\label{eq:discrete_u}
\begin{cases}
\diva\Bigl(e^{-2\varphi}(\I+\calQ[\mol  D^2p^{i+1}_0,\mol \nabla p^{i+1}_0])\nabla\psi^{i+1}\Bigr)+\nabla^\perp(\calf[\mol \nabla p^{i+1}_0])\nabla\psi^{i+1}=\diva\bigl(\calF[\mol\nabla p^{i+1}_0]\bigr),&\text{ in }\Om\\
\psi^{i+1}=0,&\text{ on }\partial\Om.
\end{cases}
\end{equation}
Notice that \eqref{eq:discrete_p} and \eqref{eq:discrete_u} are nothing else than a regularized version of Equations \eqref{eq:SGflat} and \eqref{eq:elliptic_u}, where $\nabla p^i_s$ evolves continuously on each time-step solving an ordinary differential equation of the form $\dot y=F(y)$ (we take the velocity constant on each interval $[i\tau,(i+1)\tau)$), and $\psi^{i+1}$ evolves discretely as a solution of an elliptic equation. Our next goal is to prove that there exists a fixed interval of existence $[0,t^*)$ so that for every $\epsilon>0$ and $\tau=t^*/N$, for $N\in\N$ big enough, the sequence $\{\nabla p_s^{i},\nabla\psi^{i}\}_{i=0}^{N-1}$ exists.
Solvability of System \eqref{eq:discrete_p} is ensured by the following proposition.
\begin{proposition}Let $k\geq 2$, and $\epsilon>0$. Then, for every $\nabla q_0$ in $H^k(\Om,\R^2)$ and $\nabla\phi\in L^\infty(\Om,\R^2)$, there exists a global solution $\nabla q^\epsilon_s\in C^1(\R,H^k(\Om,\R^2))$ of the following partial differential equation
\[
\begin{cases}
\partial_s\nabla q_s^\epsilon=\mathcal{F}_{\phi}^\epsilon(\nabla q_s)=\hel\mol\Bigl(e^{2V}(\mol D^2 q^\epsilon_s+\calB[\mol\nabla q^\epsilon_s]+e^{-2\varphi-2V}\I)\nabla^\perp\phi+e^{-\varphi}\mol\nabla^\perp q^{\epsilon}_s\Bigr),\\
\nabla q^\epsilon_0=\nabla q_0.
\end{cases}
\]
\end{proposition}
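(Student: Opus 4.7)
The plan is to interpret the equation as an ODE of the form $\dot y=F(y)$ on the Banach space $H^k(\Om,\R^2)$, and to apply the Cauchy--Lipschitz (Picard--Lindelöf) theorem in Banach spaces. The crucial observation is that, for a fixed $\epsilon>0$, the right-hand side $\mathcal{F}_\phi^\epsilon$ is an affine-linear, globally Lipschitz map on $H^k(\Om,\R^2)$, with a Lipschitz constant depending on $\epsilon$, on $\norm{\nabla^\perp\phi}_{L^\infty(\Om)}$, and on the smooth coefficients $V,\varphi$, but \emph{not} on the argument $\nabla q$. The smoothing afforded by $\mol$ is what makes this possible.

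For the Lipschitz bound I would first note that the mollifier satisfies $\norm{\mol h}_{H^k(\Om)}\leq C_\epsilon\norm{h}_{L^2(\Om)}$, because $\eta_\epsilon$ is a smooth compactly supported kernel and convolution throws all derivatives onto it. Combined with the boundedness of the Helmholtz-type projection $\hel\colon H^k(\Om,\R^2)\to H^k(\Om,\R^2)$ (which follows from elliptic regularity for the Neumann problem that defines it, using $\partial\Om\in C^{k+1}$), and with the fact that multiplication by $\nabla^\perp\phi\in L^\infty$ is bounded on $L^2$, a termwise inspection of the four summands inside $\hel\mol(\cdots)$ yields
\[
\norm{\mathcal{F}_\phi^\epsilon(\nabla q)}_{H^k(\Om)}\leq C(\epsilon,\phi,V,\varphi,k,\Om)\bigl(1+\norm{\nabla q}_{H^k(\Om)}\bigr).
\]
Because $\mathcal{F}_\phi^\epsilon$ depends affine-linearly on $\nabla q$, the same computation applied to $\nabla q_1-\nabla q_2$, in which the $\nabla q$-independent source terms cancel, upgrades this to the uniform Lipschitz estimate
\[
\norm{\mathcal{F}_\phi^\epsilon(\nabla q_1)-\mathcal{F}_\phi^\epsilon(\nabla q_2)}_{H^k(\Om)}\leq C(\epsilon,\phi,V,\varphi,k,\Om)\norm{\nabla q_1-\nabla q_2}_{H^k(\Om)}.
\]

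With this in hand, the Banach-space Cauchy--Lipschitz theorem produces a unique local solution $\nabla q^\epsilon\in C^1((-\delta,\delta);H^k(\Om,\R^2))$, which automatically takes values in the closed subspace of gradient fields since $\hel$ projects onto it. To extend this to a global solution I would invoke Grönwall's inequality in the form
\[
\norm{\nabla q_s^\epsilon}_{H^k(\Om)}\leq\bigl(\norm{\nabla q_0}_{H^k(\Om)}+1\bigr)e^{Cs},
\]
which precludes finite-time blow-up of the $H^k$ norm and therefore permits the standard continuation of the local solution to all of $\R$ in both directions. The argument presents no substantive obstacle; the only point that requires care is verifying that $\mol$ truly gains every derivative needed to close the estimate in $H^k$, so that the bounded affine-linear structure of $\mathcal{F}_\phi^\epsilon$ on $H^k(\Om,\R^2)$ is genuinely independent of the state.
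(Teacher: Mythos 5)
Your argument is essentially the paper's own proof: both treat $\mathcal{F}_\phi^\epsilon$ as an affine, globally Lipschitz vector field on the (closed) space of $H^k$ gradient fields, with the Lipschitz constant coming from the $\epsilon$-dependent smoothing of $\mol$, the boundedness of $\hel$, the $L^\infty$ bound on $\nabla\phi$, and the $L^2$-continuity of $\calB$, and then apply Cauchy--Lipschitz in that Banach space. Your added Gr\"onwall/continuation remark is just an explicit form of the globality that the uniform Lipschitz bound already gives, so there is no substantive difference.
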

\begin{proof}This is a direct application of the Cauchy-Lipschitz Theorem in the Banach space
\[
\mathcal{X}:=\Bigl\{\nabla  q: q\in H^{k+1}(\Om)\Bigr\}\subset H^{k}(\Om,\R^2).
\]
In fact, thanks to the Helmholtz-Hodge decomposition, it is clear that $\mathcal{F}_{\phi}^\epsilon$ maps $\mathcal{X}$ into itself. We just need to check that it is Lipschitz continuous. Let $\nabla q$ and $\nabla h$ elements in $\mathcal{X}$. Then, thanks to the properties of $\mol$ and $\hel$, we can estimate
\begin{align*}
\norm{\mathcal{F}_{\phi}^\epsilon(\nabla q)&-\mathcal{F}_{\phi}^\epsilon(\nabla h)}_{H^k(\Om)}\\
&\leq \frac{C}{\epsilon^{k}}\norm{e^{2V}\bigl(\mol(D^2 q-D^2 h)+\calB[\mol(\nabla q-\nabla h)]\bigr)\nabla^\perp\phi+e^{-\varphi}\mol(\nabla q-\nabla h)^\perp}_{L^2(\Om)}\\
&\leq \frac{C}{\epsilon^k}\norm{e^{2V}}_{\infty}\norm{\nabla\phi}_{L^\infty(\Om)}\Bigl(\norm{D^2q-D^2h}_{L^2(\Om)}+\norm{\calB[\mol(\nabla q-\nabla h)]}_{L^2(\Om)}\Bigr)\\
&\quad+\frac{C}{\epsilon^k}\norm{e^{-\varphi}}_\infty\norm{\nabla q-\nabla h}_{L^2(\Om)}.
\end{align*}
Now, thanks to Lemma \ref{lem:basic_est} we know that $\calB[\cdot]$ is a continuous functional in $L^2(\Om,\R^2)$ implying that there exists $C'=C'(V,\varphi,\Om)>0$ such that
\[
\Lip(\mathcal{F}_{\phi}^\epsilon)\leq\frac{C'}{\epsilon^k}\Bigl(\norm{\nabla\phi}_{L^\infty(\Om)}+1\Bigr)<+\infty,
\]
as wished.
\end{proof}
System \eqref{eq:discrete_u} is solvable at the step $(i+1)$ if the eigenvalue
\[
-\mu^{i+1}_s:=\inf_{\abs{\xi}=1,x\in\Om}\Bigl\{\inn{\calQ[\mol  D^2p^{i+1}_s,\mol \nabla p^{i+1}_s](x)\xi,\xi}\Bigr\},
\]
is strictly greater than $-1$ at time $s=0$. To analyse the behaviour of $\mu^{i+1}$, define
\[
-\mu^{i+1}_s(x):=\inf_{\abs{\xi}=1}\Bigl\{\inn{\calQ[\mol  D^2p^{i+1}_s,\mol \nabla p^{i+1}_s](x)\xi,\xi}\Bigr\},\quad x\in\Om,\,s\in[0,\tau].
\]
Since $\nabla p^{i+1}_s\in C^1([0,\tau],H^k(\Om,\R^2))$ we have that fixing $x$, $s\mapsto\mu^{i+1}_s(x)$ is a locally Lipschitz map, and therefore $\mu^{i+1}_s$, being the infimum  over $x\in\Om$, is  also locally Lipschitz  and hence almost everywhere differentiable.
\begin{lemma}[Dynamics of the elliptic constant]There exists $C=C(V,\varphi,\Om)>0$ such that
\begin{equation}\label{eq:LAMBDA_EST}
\frac{d}{ds}\Big\rvert_{s=s_0}(1-\mu^{i+1}_s)\geq -C\Bigl(\norm{\nabla p^{i+1}_{s_0}}_{H^4(\Om)}+1\Bigr)\norm{\nabla \psi^{i+1}}_{H^3(\Om)}-C\norm{\nabla p^{i+1}_{s_0}}_{H^3(\Om)},
\end{equation}
almost every  $s_0$ in $(0,\tau)$.
\end{lemma}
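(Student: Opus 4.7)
The plan is to combine an envelope-type identity for the infimum $-\mu^{i+1}_s$ with the evolution equation \eqref{eq:discrete_p} and the Banach algebra property of $H^3(\Om)$. Since $\nabla p^{i+1}_s \in C^1([0,\tau], H^k(\Om,\R^2))$ with $k \geq 4$, the map $F(s,x,\xi) := \inn{\calQ[\mol D^2 p^{i+1}_s, \mol\nabla p^{i+1}_s](x)\xi,\xi}$ is jointly continuous in $(s,x,\xi)$ and $C^1$ in $s$. At almost every point $s_0$ where $\mu^{i+1}_s$ is differentiable, compactness of $\bar\Om \times \{\abs{\xi}=1\}$ yields a minimizer $(x^*, \xi^*)$ of $F(s_0,\cdot,\cdot)$. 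Comparing $F(s, x^*, \xi^*) \geq -\mu^{i+1}_s$ (valid for all $s$) with the equality $F(s_0, x^*, \xi^*) = -\mu^{i+1}_{s_0}$ and taking one-sided difference quotients, the standard envelope argument gives
\[
\frac{d}{ds}\Big|_{s=s_0}(1 - \mu^{i+1}_s) = \inn{\partial_s\calQ[\mol D^2 p^{i+1}_{s_0}, \mol\nabla p^{i+1}_{s_0}](x^*)\xi^*, \xi^*}.
\]

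Next, I will estimate the right-hand side pointwise. In dimension two $\cof(A) = \tr(A)\I - A$ is linear, and the symmetric part of $\calB$ is a first-order linear operator in $\nabla p$ with smooth coefficients, so the chain rule yields
\[
\abs{\partial_s\calQ(x)} \lesssim \norm{\mol\partial_s\nabla p}_{W^{1,\infty}(\Om)}.
\]
Substituting from \eqref{eq:discrete_p} the expression $\partial_s\nabla p^{i+1}_{s_0} = \hel\mol G$ with
\[
G := e^{2V}\bigl(\mol D^2 p^{i+1}_{s_0} + \calB[\mol\nabla p^{i+1}_{s_0}] + e^{-2\varphi-2V}\I\bigr)\nabla^\perp\psi^{i+1} + e^{-\varphi}\mol\nabla^\perp p^{i+1}_{s_0},
\]
together with the fact that $\mol$ and $\hel$ are bounded on $H^3(\Om)$ uniformly in $\epsilon$ and the two-dimensional embedding $H^3(\Om) \hookrightarrow W^{1,\infty}(\Om)$, I will reduce matters to estimating $\norm{G}_{H^3(\Om)}$.

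Finally, I will apply the Banach algebra property of $H^3(\Om)$ combined with Lemma \ref{lem:basic_est} term by term: the product of $\mol D^2 p^{i+1}_{s_0}$ with $\nabla^\perp\psi^{i+1}$ contributes $\norm{\nabla p^{i+1}_{s_0}}_{H^4}\norm{\nabla\psi^{i+1}}_{H^3}$, the $\calB$-term contributes $\norm{\nabla p^{i+1}_{s_0}}_{H^3}\norm{\nabla\psi^{i+1}}_{H^3}$, the $\I$-term contributes $\norm{\nabla\psi^{i+1}}_{H^3}$, and the final summand contributes $\norm{\nabla p^{i+1}_{s_0}}_{H^3}$. Summing and absorbing lower-order terms reproduces exactly the right-hand side of \eqref{eq:LAMBDA_EST}. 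The main delicacy is the envelope step, since the infimum is taken jointly over $\bar\Om$ and the unit circle; once it is in place, the remaining chain of Sobolev estimates is routine, and it is essential that all bounds on $\mol$ and $\hel$ be uniform in $\epsilon$ so that the final constant is genuinely independent of the mollification parameter.
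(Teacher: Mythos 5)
Your proposal is correct and follows essentially the same route as the paper: control the derivative of the infimum by testing against a (near-)minimizing direction, write $\partial_s\calQ[\mol D^2p_s,\mol\nabla p_s]=\calQ[\mol D^2\partial_s p_s,\mol\nabla\partial_s p_s]$ by linearity, substitute the evolution equation \eqref{eq:discrete_p}, and close with Sobolev embedding and the Banach algebra property of $H^3(\Om)$, which reproduces exactly the right-hand side of \eqref{eq:LAMBDA_EST}. The only cosmetic difference is that you invoke a Danskin-type envelope identity at the minimizer taken at time $s_0$, while the paper picks the minimizing direction at time $s_0+\delta$ and uses the fundamental theorem of calculus on the difference quotient before letting $\delta\to0$; both rest on the same perturbation mechanism and the same uniform-in-$\epsilon$ boundedness of $\mol$ and $\hel$.
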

\begin{proof}Take $\delta\neq 0$ small, $x\in\Om$ and $s_0\in(0,\tau)$. Then, let $\xi_\delta\in \R^2$ be the unit vector realizing 
\[
-\mu^{i+1}_{s_0+\delta}(x)=\inn{\calQ[\mol D^2 p_{s_0+\delta}^{i+1}(x),\mol \nabla p_{s_0+\delta}^{i+1}(x)]\xi_{\delta},\xi_{\delta}}.
\]
Then,
\begin{align*}
\mu^{i+1}_{s_0}(x)-\mu^{i+1}_{s_0+\delta}(x)&\geq\inn{\Bigl(\calQ[\mol D^2 p_{s_0+\delta}^{i+1}(x),\mol \nabla p_{s_0+\delta}^{i+1}(x)]-\calQ[\mol D^2 p_{s_0}^{i+1}(x),\mol \nabla p_{s_0}^{i+1}(x)]\Bigr)\xi_\delta,\xi_{\delta}}\\
&=\inn{\int_{s_0}^{s_0+\delta}\partial_t\calQ[\mol D^2 p_{t}^{i+1}(x),\mol \nabla p_{t}^{i+1}(x)]\,dt\cdot\xi_\delta,\xi_\delta}\\
&\geq-\int_{s_0}^{s_0+\delta}\norm{\calQ[\mol D^2\partial_t p_{t}^{i+1}(x),\mol \nabla \partial_t p_{t}^{i+1}(x)]}_{L^\infty(\Om)}.
\end{align*}
By the Sobolev embedding of $L^\infty(\Om)$ in $H^2(\Om)$ and by Lemma \ref{lem:basic_est}, we obtain that there exists $C_1>0$ such that
\[
\mu^{i+1}_{s_0}(x)-\mu^{i+1}_{s_0+\delta}(x)\geq -C_1\int_{s_0}^{s_0+\delta}\norm{\partial_t\nabla p_t^{i+1}}_{H^3(\Om)}\,dt.
\]
Finally, thanks to the discrete construction of the pressure gradient given by Equation \eqref{eq:discrete_p}, the fact that $H^3(\Om)$ is a Banach Algebra, we conclude that there exists $C>0$ such that
\[
\mu^{i+1}_{s_0}(x)-\mu^{i+1}_{s_0+\delta}(x)\geq -C\int_{s_0}^{s_0+\delta}\Bigl(\norm{\nabla p^{i+1}_{t}}_{H^4(\Om)}+1\Bigr)\norm{\nabla \psi^{i+1}}_{H^3(\Om)}-C\norm{\nabla p^{i+1}_{t}}_{H^3(\Om)}\,dt.
\]
The result follows by dividing everything by $\delta$, and letting $\delta$ go to zero.
\end{proof}

\section{Energy estimates}\label{sec:energy}
\begin{proposition}[Energy estimates]Let $k\geq 4$. Then, there exists $C=C(k,\Om,V,\varphi)>0$ such that
\begin{equation}\label{eq:P_EST}
\frac{d}{ds}\norm{\nabla p^{i+1}_s}_{H^k(\Om)}\leq C\Bigl(\norm{\nabla p^{i+1}_s}_{H^k(\Om)}+1\Bigr)\norm{\nabla \psi^{i+1}}_{H^k(\Om)}+C\norm{\nabla p^{i+1}_s}_{H^k(\Om)}.
\end{equation}
\end{proposition}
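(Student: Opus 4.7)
Since $\partial_s\nabla p^{i+1}_s=\hel\mol F_s$ with $F_s$ the bracketed quantity on the right-hand side of \eqref{eq:discrete_p}, my plan is to run the energy estimate at the level of $\norm{\nabla p^{i+1}_s}_{H^k(\Om)}^2$ (rather than of its square root) and to exploit the divergence-free, boundary-tangent character of the transporting ``velocity'' $\nabla^\perp\psi^{i+1}$ to save the only borderline term $e^{2V}\mol D^2p^{i+1}_s\cdot\nabla^\perp\psi^{i+1}$ from apparently costing an extra derivative of $p$, which would be fatal. Differentiating yields
\[
\tfrac{1}{2}\tfrac{d}{ds}\norm{\nabla p^{i+1}_s}_{H^k(\Om)}^2=\sum_{\abs{\alpha}\le k}\int_\Om D^\alpha\nabla p^{i+1}_s\cdot D^\alpha(\hel\mol F_s)\,dx,
\]
and using the self-adjointness of $\mol,\hel$ together with $\hel(\nabla D^\alpha p^{i+1}_s)=\nabla D^\alpha p^{i+1}_s$, each summand reduces, modulo lower-order commutators $[D^\alpha,\mol]$, $[D^\alpha,\hel]$ on the bounded curved domain (standard, cf.~\cite{E15,MB01}), to $\int_\Om\mol D^\alpha\nabla p^{i+1}_s\cdot D^\alpha F_s\,dx$.

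For multi-indices $\abs{\alpha}\le k-1$, and for those summands of $F_s$ that do not contain $\mol D^2p^{i+1}_s$ (namely $e^{2V}\calB[\mol\nabla p^{i+1}_s]\nabla^\perp\psi^{i+1}$, $e^{-2\varphi}\nabla^\perp\psi^{i+1}$ and $e^{-\varphi}\mol\nabla^\perp p^{i+1}_s$), I would invoke Lemma~\ref{lem:basic_est} and the Banach-algebra structure of $H^{k-1}(\Om)$ in dimension two (valid for $k\ge 3$) to bound the corresponding contributions directly by $\norm{\nabla p^{i+1}_s}_{H^k}\bigl[(\norm{\nabla p^{i+1}_s}_{H^k}+1)\norm{\nabla\psi^{i+1}}_{H^k}+\norm{\nabla p^{i+1}_s}_{H^k}\bigr]$, which absorbs cleanly into the target estimate.

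The only delicate case is $\abs{\alpha}=k$ paired with the borderline term. I would apply the double-commutator identity \eqref{eq:interpol_2} with $f_{ij}:=e^{2V}\mol\partial_i\partial_jp^{i+1}_s$ and $g_j:=(\nabla^\perp\psi^{i+1})^j$ to write $D^\alpha(\sum_j f_{ij}g_j)=\sum_j(f_{ij}D^\alpha g_j+g_j D^\alpha f_{ij})+R_i$, with $\norm{R}_{L^2}\lesssim\norm{\nabla p^{i+1}_s}_{H^k}\norm{\nabla\psi^{i+1}}_{H^k}$ via the Sobolev embedding $H^2(\Om)\hookrightarrow L^\infty(\Om)$ available since $k\ge 4$; the piece $\sum_j f_{ij}D^\alpha g_j$ is likewise $\lesssim\norm{\mol D^2 p^{i+1}_s}_{L^\infty}\norm{D^\alpha\nabla\psi^{i+1}}_{L^2}\lesssim\norm{\nabla p^{i+1}_s}_{H^k}\norm{\nabla\psi^{i+1}}_{H^k}$. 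The remaining piece $\sum_j g_j D^\alpha f_{ij}$, after commuting $D^\alpha$ through $\mol$ so that $D^\alpha\mol\partial_i\partial_jp^{i+1}_s=\partial_jU_i$ with $U:=\mol D^\alpha\nabla p^{i+1}_s$, pairs with $U$ to produce
\[
\int_\Om e^{2V}\,U\cdot\bigl((\nabla^\perp\psi^{i+1}\cdot\nabla)U\bigr)\,dx=\tfrac{1}{2}\int_\Om e^{2V}\,\nabla^\perp\psi^{i+1}\cdot\nabla\abs{U}^2\,dx.
\]
Integrating by parts, the boundary term drops thanks to $\nabla^\perp\psi^{i+1}\cdot\nu=0$ on $\partial\Om$ (a direct consequence of $\psi^{i+1}\lvert_{\partial\Om}=0$), and $\diva(\nabla^\perp\psi^{i+1})=0$ gives $\diva(e^{2V}\nabla^\perp\psi^{i+1})=2e^{2V}\nabla V\cdot\nabla^\perp\psi^{i+1}$; the expression thus reduces to $-\int_\Om e^{2V}(\nabla V\cdot\nabla^\perp\psi^{i+1})\abs{U}^2\,dx$, which is $\lesssim\norm{\nabla\psi^{i+1}}_{H^k}\norm{\nabla p^{i+1}_s}_{H^k}^2$.

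The main obstacle I anticipate is the careful bookkeeping of the commutators $[D^\alpha,\mol]$ and $[D^\alpha,\hel]$ on the bounded curved domain $\Om$, so that all constants remain genuinely independent of $\epsilon$, $\tau$ and the step $i$; this is where the mollifier--domain interaction is most sensitive and where one must rely on the standard extension techniques of \cite{E15,MB01}. Once this is handled, gathering all contributions gives $\tfrac{d}{ds}\norm{\nabla p^{i+1}_s}_{H^k}^2\lesssim\norm{\nabla p^{i+1}_s}_{H^k}\bigl[(\norm{\nabla p^{i+1}_s}_{H^k}+1)\norm{\nabla\psi^{i+1}}_{H^k}+\norm{\nabla p^{i+1}_s}_{H^k}\bigr]$, and dividing by $2\norm{\nabla p^{i+1}_s}_{H^k}$ produces exactly \eqref{eq:P_EST}.
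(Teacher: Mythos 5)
Your proposal is correct and follows essentially the same route as the paper's proof: an $H^k$ energy estimate using self-adjointness of $\mol$ and $\hel$, the commutator/interpolation bounds \eqref{eq:interpol_1}--\eqref{eq:interpol_2} together with Lemma \ref{lem:basic_est} for the lower-order terms, and the same key integration by parts on the top-order transport term, where the boundary contribution vanishes since $\nabla^\perp\psi^{i+1}\cdot\nu=0$ and $\diva(\nabla^\perp\psi^{i+1})=0$ leaves only a term of size $\norm{\nabla\psi^{i+1}}_{H^k(\Om)}\norm{\nabla p^{i+1}_s}_{H^k(\Om)}^2$. The only cosmetic difference is that you place $e^{2V}$ inside the differentiated factor (producing harmless extra lower-order terms) whereas the paper keeps it outside, and you flag the $[D^\alpha,\mol]$, $[D^\alpha,\hel]$ bookkeeping explicitly, which the paper treats as standard.
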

\begin{proof}
Fix any multi index $\abs{\alpha}\leq k$. Since the operators $\mol$ and $\hel$ commute and are self-adjoint with respect to the $L^2$-product, we  can compute
\begin{align*}
\frac{d}{ds}\frac{1}{2}\int&\abs{\partial_\alpha\nabla p^{i+1}_s}^2\,dx=\int\inn{\partial_\alpha\nabla p^{i+1}_s,\partial_\alpha\partial_s\nabla p^{i+1}_s}\,dx\\
&=\int\inn{\mol\partial_\alpha\nabla p^{i+1}_s,\partial_\alpha\Bigl(e^{2V}(\mol D^2 p^{i+1}_s+\calB[\mol\nabla p^{i+1}_s]+e^{-2\varphi-2V}\I)\nabla^\perp\psi^{i+1}+e^{-\varphi}\mol\nabla^\perp p^{i+1}_s\Bigr)}\,dx.
\end{align*}
Set $P_s:=\mol\nabla p^{i+1}_s$. There exists $C_\varphi>0$ such  that
\begin{align*}
\frac{d}{ds}&\frac{1}{2}\int\abs{\partial_\alpha\nabla p^{i+1}_s}^2\,dx=\int\inn{\partial_\alpha P_s,\partial_\alpha\Bigl(e^{2V}(DP_s+\calB[P_s]+e^{-2\varphi-2V}\I)\nabla^\perp\psi^{i+1}+e^{-\varphi}P^\perp_s\Bigr)}\,dx\\
&\leq\int\inn{\partial_\alpha P_s,\partial_\alpha\Bigl(e^{2V}(DP_s+\calB[P_s]+e^{-2\varphi-2V}\I)\nabla^\perp\psi^{i+1}\Bigr)}\,dx+C_{\varphi}\norm{\partial_\alpha P_s}_{L^2(\Om)}\norm{P_s}_{H^{\abs{\alpha}}(\Om)}.
\end{align*}
To estimate the remaining term, we argue by interpolation: subtracting and adding the term
\[
R:=\int\inn{\partial_\alpha P_s,e^{2V}\partial_\alpha\Bigl(DP_s+\calB[P_s]+e^{-2\varphi-2V}\I\Bigr)\nabla^\perp\psi^{i+1}}\,dx,
\]
to
\[
\int\inn{\partial_\alpha P_s,\partial_\alpha\Bigl(e^{2V}(DP_s+\calB[P_s]+e^{-2\varphi-2V}\I)\nabla^\perp\psi^{i+1}\Bigr)}\,dx
\]
applying Cauchy-Schwarz and interpolation \eqref{eq:interpol_1}, we obtain that there exists $C_1=C_1(\Om)>0$ such that
\begin{align*}
\int\inn{\partial_\alpha P_s&,\partial_\alpha\Bigl(e^{2V}(DP_s+\calB[P_s]+e^{-2\varphi-2V}\I)\nabla^\perp\psi^{i+1}\Bigr)}\,dx-R+R\\
&\leq C_1\norm{\partial_\alpha P_s}_{L^2(\Om)}\Bigl(\norm{e^{2V}\nabla\psi^{i+1}}_{W^{1,\infty}(\Om)}\norm{DP_s+\calB[P_s]+e^{-2\varphi-2V}\I}_{H^{k-1}(\Om)}\\
&\quad+\norm{DP_s+\calB[P_s]+e^{-2\varphi-2V}\I}_{L^\infty(\Om)}\norm{e^{2V}\nabla\psi^{i+1}}_{H^k(\Om)}\Bigr)+R.
\end{align*}
Taking advantage once again of Lemma \ref{lem:basic_est} and suitable Sobolev embeddings, we just proved that there exists $C_2=C_2(\Om,V,\varphi)>0$ such that
\begin{equation}\label{eq:energy1}
\frac{d}{ds}\frac{1}{2}\int \abs{\partial_\alpha\nabla p^{i+1}_s}^2\,dx \leq C_{2}\norm{\partial_\alpha P_s}_{L^2(\Om)}\bigl(\norm{P_s}_{H^k(\Om)}+1\bigl)\norm{\nabla\psi^{i+1}}_{H^{k}(\Om)}+C_\varphi\norm{\partial_\alpha P_s}_{L^2(\Om)}\norm{P_s}_{H^{k}(\Om)}+R.
\end{equation}
We now estimate the contribution of $R$. First of all, it is easy to control the lower order terms simply by Cauchy-Schwarz and Lemma \ref{lem:basic_est}, obtaining that
\begin{equation}\label{eq:energy2}
\begin{split}
R=\int&\inn{\partial_\alpha P_s,e^{2V}\partial_\alpha\Bigl(DP_s+\calB[P_s]+e^{-2\varphi-2V}\I\Bigr)\nabla^\perp\psi^{i+1}}\,dx\\
&\leq\int\inn{\partial_\alpha P_s,e^{2V}\partial_\alpha(DP_s)\nabla^\perp\psi^{i+1}}\,dx+\norm{\partial_\alpha P_s}_{L^2(\Om)}\norm{e^{2V}\nabla\psi^{i+1}}_{L^\infty}\norm{\calB[P_s]+e^{-2\varphi-2V}\I}_{H^k(\Om)}\\
&\leq \int\inn{\partial_\alpha P_s,e^{2V}\partial_\alpha(DP_s)\nabla^\perp\psi^{i+1}}\,dx+C_3\norm{\partial_\alpha P_s}_{L^2(\Om)}\norm{\nabla\psi^{i+1}}_{H^k(\Om)}(\norm{P_s}_{H^k(\Om)}+1),
\end{split}
\end{equation}
for some constant $C_3=C_3(\Om,V,\varphi)>0$. Finally we get rid of the higher order term integrating by parts:
\begin{equation}\label{eq:energy3}
\begin{split}
\int\inn{&\partial_\alpha P_s,e^{2V}\partial_\alpha(DP_s)\nabla^\perp\psi^{i+1}}\,dx=\int\inn{\nabla\Bigl(\frac{\abs{\partial_\alpha P_s}^2}{2}\Bigr),e^{2V}\nabla^\perp\psi^{i+1}}\,dx
\\&=\int\diva\Bigl(e^{2V}\nabla^\perp\psi^{i+1}\frac{\abs{\partial_\alpha P_s}^2}{2}\Bigr)-\frac{\abs{\partial_\alpha P_s}^2}{2}\diva(e^{2V}\nabla^\perp\psi^{i+1})\,dx\\
&=\int_{\partial\Om}e^{2V}\frac{\abs{\partial_\alpha P_s}^2}{2}\nabla^\perp\psi^{i+1}\cdot\nu\,dx-\int\frac{\abs{\partial_\alpha P_s}^2}{2}\Bigl(e^{2V}\diva(\nabla^\perp\psi^{i+1})+e^{2V}\inn{2\nabla V,\nabla^\perp\psi^{i+1}}\Bigr)\,dx\\
&\leq C_4\norm{\partial_\alpha P_s}_{L^2(\Om)}^2\norm{e^{2V}\nabla\psi^{i+1}}_{L^\infty(\Om)}.
\end{split}
\end{equation}
Combining \eqref{eq:energy1}, \eqref{eq:energy2}, \eqref{eq:energy3}, and summing over $\abs{\alpha}=0,\dots,k$ we obtain the desired result.
\end{proof}
Now that we have obtained  a growth estimate on $\mu^{i+1}_s(x)$ and $\norm{\nabla p^{i+1}_s}_{H^k(\Om)}$, we need analyse the behaviour of the velocity vector field. This last estimate is a direct consequence of the explicit regularity results of Section \ref{sec:elliptic_estimates}.
\begin{proposition}[Elliptic estimates on the velocity]For every $k\geq 4$, there exists some constant $C=C(k,\Om,V\varphi)>0$ such that
\begin{equation}\label{eq:U_EST}
\norm{\nabla\psi^{i+1}}_{H^k(\Om)}\leq C(1-\mu_0^{i+1})^{-(k+1)k-1}\bigr(\norm{\nabla p^{i+1}_0}_{H^{k}(\Om)}+1\bigr)^{(k+1)k}\norm{\nabla p^{i+1}_0}_{H^k(\Om)}.
\end{equation}
\begin{proof}
It suffices to combine Proposition \ref{prop:rescaled_global_estimates} and Lemma \ref{lem:basic_est}, recalling that in our case $\calb$ comes from a rotated gradient by construction.
\end{proof}
\end{proposition}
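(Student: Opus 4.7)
The plan is to recognize \eqref{eq:discrete_u} as a particular instance of the generic elliptic template \eqref{eq:elliptic} analyzed in Section~\ref{sec:elliptic_estimates}, and then invoke the sharper global estimate \eqref{eq:GLOBAL_EST2} of Proposition~\ref{prop:global_estimates}. With the reading
\[
\calA := e^{-2\varphi}\bigl(\I + \calQ[\mol D^2 p^{i+1}_0,\, \mol \nabla p^{i+1}_0]\bigr),\quad \calb := \nabla^\perp \calf[\mol \nabla p^{i+1}_0],\quad \calF := \calF[\mol \nabla p^{i+1}_0],
\]
the structural observation, already built into the derivation following Remark~\ref{rmk:symmetrization}, is that $\calb$ is a rotated gradient by construction. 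This is precisely the hypothesis for \eqref{eq:GLOBAL_EST2}, so I can avoid the $\norm{\nabla\psi^{i+1}}_{L^2(\Om)}$ term that would otherwise appear on the right-hand side of \eqref{eq:GLOBAL_EST1}; this is essential, since otherwise the estimate would involve the unknown on both sides.

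Next I would verify the quantitative inputs to \eqref{eq:GLOBAL_EST2}. By the very definition of $\mu_0^{i+1}$ one has $\I + \calQ \geq (1-\mu_0^{i+1})\I$, and since $e^{-2\varphi}$ is uniformly bounded below on $\overline{\Om}$, the ellipticity constant of $\calA$ satisfies $\lambda \gtrsim 1 - \mu_0^{i+1}$ with a constant depending only on $\varphi$. For the Sobolev constant $\mathbf{M}$ appearing in Proposition~\ref{prop:global_estimates}, I would apply Lemma~\ref{lem:basic_est} with $\xi = \mol \nabla p^{i+1}_0$; combined with the standard fact that $\mol$ is bounded in every Sobolev norm uniformly in $\epsilon$, this would give
\[
\mathbf{M} = \norm{\calA}_{H^{k-1}(\Om)} + \norm{\diva(\calA)}_{H^{k-1}(\Om)} + \norm{\calb}_{H^{k-1}(\Om)} \lesssim 1 + \norm{\nabla p^{i+1}_0}_{H^k(\Om)},
\]
and analogously $\norm{\calF}_{H^k(\Om)} \lesssim \norm{\nabla p^{i+1}_0}_{H^k(\Om)}$. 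The delicate point that makes the first estimate work at order $H^{k-1}$, rather than $H^{k-2}$ (which would cost one full derivative and upset the induction in Section~\ref{sec:energy}), is the cancellation $\diva \cof(D\xi^T) = 0$ recorded in the second part of Lemma~\ref{lem:basic_est}; this is exactly the mechanism advertised in the outline preceding Section~\ref{sec:3}.

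Plugging these bounds into \eqref{eq:GLOBAL_EST2} immediately yields the claimed inequality. I do not expect any genuine obstacle beyond bookkeeping, since the whole machinery of Section~\ref{sec:elliptic_estimates} was engineered precisely so that this final step becomes a plug-in. The two subtleties worth guarding against are (i) that the rotated-gradient structure $\calb = \nabla^\perp \calf$ survives the mollification step, which it does because only the scalar potential $\calf[\mol \nabla p^{i+1}_0]$ is mollified while the outer $\nabla^\perp$ is untouched; and (ii) that $\mathbf{M}$ is controlled by $\norm{\nabla p^{i+1}_0}_{H^k(\Om)}$ and not by $\norm{\nabla p^{i+1}_0}_{H^{k+1}(\Om)}$, which is again exactly the content of the cofactor-divergence cancellation noted above.
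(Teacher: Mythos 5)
Your proposal is correct and follows essentially the same route as the paper: identify \eqref{eq:discrete_u} with the template \eqref{eq:elliptic}, use the rotated-gradient structure of $\calb$ to invoke the version of the global estimate without the $L^2$-term (the paper cites Proposition \ref{prop:rescaled_global_estimates}, you cite its renormalized form \eqref{eq:GLOBAL_EST2}, which is the same mechanism), and control $\lambda$, $\mathbf{M}$ and $\calF$ via the ellipticity constant $1-\mu_0^{i+1}$ and Lemma \ref{lem:basic_est} with the cofactor-divergence cancellation. Your extra checks (symmetrization surviving mollification, $\mathbf{M}$ at the $H^k$ rather than $H^{k+1}$ level) are exactly the bookkeeping the paper leaves implicit.
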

Combining the estimates on the pressure gradient \eqref{eq:P_EST} and on the velocity vector field \eqref{eq:U_EST} we have that
\begin{equation}\label{eq:combination_P_U}
\frac{d}{ds}\norm{\nabla p^{i+1}_s}_{H^k(\Om)}\leq C(\norm{\nabla p^{i+1}_s}_{H^k(\Om)}+1)\frac{(\norm{\nabla p^{i+1}_0}_{H^k(\Om)}+1)^{k(k+1)}}{(1-\mu^{i+1}_0)^{(k+1)k+1}}\norm{\nabla p^{i+1}_0}_{H^k(\Om)}+C\norm{\nabla p^{i+1}_s}_{H^k(\Om)},
\end{equation}
and similarly by the estimate \eqref{eq:LAMBDA_EST} on $1-\mu_s^{i+1}$ it holds that
\begin{equation}\label{eq:combination_LAMBDA_U}
\frac{d}{ds}(1-\mu^{i+1}_s)\geq -C(\norm{\nabla p^{i+1}_s}_{H^k(\Om)}+1)\frac{(\norm{\nabla p^{i+1}_0}_{H^k(\Om)}+1)^{k(k+1)}}{(1-\mu^{i+1}_0)^{(k+1)k+1}}\norm{\nabla p^{i+1}_0}_{H^k(\Om)}-C\norm{\nabla p^{i+1}_s}_{H^k(\Om)},
\end{equation}
a.e. in $(0,\tau)$. Define
\[
\Theta^{i+1}_s:=\Biggl(\frac{\norm{\nabla p^{i+1}_s}_{H^k(\Om)}+1}{1-\mu^{i+1}_s}\Biggr)^{k(k+1)+2},
\]
together with the monotonically increasing Lipschitz function
\[
\tilde\Theta^{i+1}_s:=\max_{t\in[0,s]}\Theta^{i+1}_t.
\]
The next lemma will constitute the crucial step in the proof of the main Theorem.
\begin{lemma}\label{lem:THETA_dynamic}
There  exits $C=C(\Om,V,\varphi,k)>0$ such that 
\begin{equation}\label{eq:THETA_dynamics}
\frac{d}{ds}\tilde\Theta_s^{i+1}\leq C(\tilde\Theta_s^{i+1})^2,
\end{equation}
almost everywhere in $[0,\tau]$.
\end{lemma}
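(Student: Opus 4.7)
The plan is to exploit the running-maximum structure of $\tilde\Theta^{i+1}$: at almost every $s\in(0,\tau)$ where $(\tilde\Theta^{i+1})'_s\neq 0$ the maximum must be attained at $s$, so $\tilde\Theta^{i+1}_s=\Theta^{i+1}_s$, and consequently the quotient $R_s:=(\norm{\nabla p^{i+1}_s}_{H^k(\Om)}+1)/(1-\mu^{i+1}_s)$ satisfies $R_s\geq R_0$. The inequality then reduces to dividing \eqref{eq:combination_P_U} by $N_s:=\norm{\nabla p^{i+1}_s}_{H^k(\Om)}+1$ and \eqref{eq:combination_LAMBDA_U} by $\Lambda_s:=1-\mu^{i+1}_s$, and summing.

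First I would observe that $N_s$ is $C^1$ in $s$ thanks to \eqref{eq:discrete_p}, while $\Lambda_s$ is locally Lipschitz by the earlier dynamics estimate. Hence $\Theta^{i+1}_s=R_s^{k(k+1)+2}$ and $\tilde\Theta^{i+1}_s$ are locally Lipschitz, and in particular a.e.\ differentiable. Moreover, Lemma \ref{lem:basic_est} together with the Sobolev embedding $H^{k-1}(\Om)\hookrightarrow L^\infty(\Om)$ yields $\Lambda_s\leq 1+\norm{\calQ}_{L^\infty(\Om)}\leq C(1+\norm{\nabla p^{i+1}_s}_{H^k(\Om)})=CN_s$, so $R_s\geq 1/C$ uniformly, for a constant depending only on $V,\varphi,\Om,k$. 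This lower bound will be used to trade additive $+1$'s for multiplicative factors of $R_s$.

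The core computation is then
\[
\frac{d}{ds}\log\Theta^{i+1}_s=(k(k+1)+2)\Bigl(\frac{N_s'}{N_s}-\frac{\Lambda_s'}{\Lambda_s}\Bigr)\leq C(R_s+1)\bigl(R_0^{k(k+1)+1}+1\bigr),
\]
valid almost everywhere in $(0,\tau)$, where I used $N_0^{k(k+1)+1}/\Lambda_0^{k(k+1)+1}=R_0^{k(k+1)+1}$ and absorbed the lower-order terms $CN_s$ from \eqref{eq:combination_P_U}--\eqref{eq:combination_LAMBDA_U} into the main term. I would then perform a case distinction at an a.e.\ point of differentiability of $\tilde\Theta^{i+1}$: either $\tilde\Theta^{i+1}_s>\Theta^{i+1}_s$, in which case continuity forces $\tilde\Theta^{i+1}$ to be locally constant at $s$ and $(\tilde\Theta^{i+1})'_s=0$; or $\tilde\Theta^{i+1}_s=\Theta^{i+1}_s$, which gives $R_s\geq R_0$ and $(\tilde\Theta^{i+1})'_s\leq(\Theta^{i+1})'_s$. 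Combining $R_s\geq R_0$ with $R_s\geq 1/C$ yields $R_0^{k(k+1)+1}+1\leq C'R_s^{k(k+1)+1}$ and $R_s+1\leq C'R_s$, so
\[
\frac{d}{ds}\tilde\Theta^{i+1}_s\leq C''\Theta^{i+1}_s\cdot R_s^{k(k+1)+2}=C''(\tilde\Theta^{i+1}_s)^2.
\]

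The main delicate point is bookkeeping with the running maximum and matching exponents. The specific value $k(k+1)+2$ in the definition of $\Theta$ is dictated precisely by this computation: the factor $R_0^{k(k+1)+1}$ inherited from the elliptic regularity estimate \eqref{eq:U_EST} for $\nabla\psi^{i+1}$ must be absorbed into $R_s^{k(k+1)+1}$ via $R_s\geq R_0$, leaving a single extra factor $R_s$ which produces the square $(\tilde\Theta^{i+1})^2$. Aside from the a.e.\ dichotomy for the running maximum, the argument is purely algebraic and I expect no further obstacles.
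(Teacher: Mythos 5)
Your argument is correct and essentially the paper's: both combine the energy estimate \eqref{eq:combination_P_U}, the elliptic-constant dynamics \eqref{eq:combination_LAMBDA_U} and the velocity bound \eqref{eq:U_EST}, use the Sobolev-embedding lower bound $\bigl(\norm{\nabla p_s}_{H^k(\Om)}+1\bigr)/(1-\mu_s)\gtrsim 1$, and treat the frozen time-$0$ data through the running maximum together with the dichotomy at points of differentiability of $\tilde\Theta$. The only (harmless) difference is bookkeeping: the paper bounds $\frac{d}{ds}\Theta_s\leq C\tilde\Theta_s^2$ at a.e.\ point by taking the maximum over $[0,s]$ inside the estimates, whereas you pass to the logarithmic derivative and use the inequality $R_s\geq R_0$ only at points where the maximum is attained, which amounts to the same absorption of the exponent $k(k+1)+1$.
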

\begin{proof}In this proof we omit the $(i+1)$ index in our notation. Also, set $M=M(k):=k(k+1)+2$. First of all, by Sobolev embedding we have that there exists $C_1=C_1(\Om)>0$ such that
\[
1\leq\frac{\norm{\I+\calQ[\mol D^2 p_s,\mol\nabla p_s]}_{L^\infty(\Om)}}{1-\mu_s}\leq\frac{1+C_1\norm{\nabla p_s}_{H^3(\Om)}}{1-\mu_s},
\]
hence, up to multiplying all the following estimates by $\max\{1,C_1\}$, we can suppose without loss of generality that
\[
1\leq\frac{1+\norm{\nabla p_s}_{H^k(\Om)}}{1-\mu_s},
\]
for all $s\in[0,\tau]$. In particular we have that
\begin{align*}
\frac{d}{ds}\norm{\nabla p_s}_{H^k(\Om)}&\leq C\max_{t\in[0,s]}\Biggl\{\Biggl(\Bigl(\frac{\norm{\nabla p_t}_{H^k(\Om)}+1}{1-\mu_t}\Bigr)^{k(k+1)+1}+1\Biggl)\norm{\nabla p_t}_{H^k(\Om)}\Biggr\}\\
&\leq 2C\max_{t\in[0,s]}\Biggl\{\Bigl(\frac{\norm{\nabla p_t}_{H^k(\Om)}+1}{1-\mu_t}\Bigr)^{k(k+1)+1}\norm{\nabla p_t}_{H^k(\Om)}\Biggr\}.
\end{align*}
The same bound clearly holds also for $\mu_s$.
Therefore
\begin{align*}
\frac{d}{ds}\Theta_s&= M\Bigl(\frac{\norm{\nabla p_s}_{H^k(\Om)}+1}{1-\mu_s}\Bigr)^{M-1}\Bigl(\frac{1}{1-\mu_s}+\frac{\norm{\nabla p_s}_{H^k(\Om)}+1}{(1-\mu_s)^2}\Bigr)\frac{d}{ds}\norm{\nabla p_s}_{H^k(\Om)}\\
&\leq 2CM\max_{t\in[0,s]}\Biggl\{\Bigl(\frac{\norm{\nabla p_t}_{H^k(\Om)}+1}{1-\mu_t}\Bigr)^{M+k(k+1)}\frac{\norm{\nabla p_t}_{H^k(\Om)}}{1-\mu_t}\Bigl(1+\frac{\norm{\nabla p_t}_{H^k(\Om)}+1}{1-\mu_t}\Bigr)\Biggr\}\\
&\leq 4CM\max_{t\in[0,s]}\Biggl\{\Bigl(\frac{\norm{\nabla p_t}_{H^k(\Om)}+1}{1-\mu_t}\Bigr)^{M+k(k+1)+2}\Biggr\}.
\end{align*}
This proves that
\[
\frac{d}{ds}\Theta_s\leq 4CM\tilde\Theta_s^2.
\]
We distinguish two cases: if  $\frac{d}{ds}\Theta_s\leq 0$, then clearly
\[
\frac{d}{ds}\tilde\Theta_s=0,
\]
and we are done. Otherwise
\[
\frac{d}{ds}\tilde\Theta_s=\frac{d}{ds}\Theta_s\leq 4CM\tilde\Theta^2_s,
\]
completing the proof of the Lemma.
\end{proof}
We only need the following little observation before proving the main result of this section.
\begin{lemma}\label{lem:recursive_relation} Let $\seq{x_i}_{i\geq 0}$ be any real sequence that satisfies for some $c>0$ the recursive relation
\[
x_{i+1}\leq\frac{x_i}{1-cx_i}.
\]
If there exists $N\in\N$ such that $x_0\leq \frac{1}{cN}$, then
\begin{equation}\label{eq:recursive_relation_lemma}
x_{i+1}\leq\frac{x_0}{1-c(i+1)x_0},\,\text{ for every }i\in\{-1,\dots,N-1\}.
\end{equation}
\end{lemma}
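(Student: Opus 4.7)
The proof is a straightforward induction on $i$, with the main ingredient being monotonicity of the map $t\mapsto t/(1-ct)$.

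First I would set up the induction. The base case is $i=-1$, which asserts $x_0\leq x_0/(1-0\cdot cx_0)=x_0$; this is trivial. For the inductive step, assuming \eqref{eq:recursive_relation_lemma} holds for some $i\in\{-1,\dots,N-2\}$, I want to deduce it at step $i+1$. From the hypothesis $x_0\leq 1/(cN)$ and $i+1\leq N-1$ we obtain $c(i+1)x_0\leq (i+1)/N<1$, so the inductive bound $x_{i+1}\leq x_0/(1-c(i+1)x_0)$ is well-defined and in particular shows $x_{i+1}<1/c$. This bound $x_j<1/c$ will be needed throughout, to ensure every denominator $1-cx_j$ appearing below is strictly positive.

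The central computation is the following: the map $f(t):=t/(1-ct)$ is monotonically increasing on $[0,1/c)$ (its derivative is $(1-ct)^{-2}>0$). By the inductive hypothesis $x_{i+1}\leq x_0/(1-c(i+1)x_0)<1/c$, so applying $f$ to both sides of the recursive relation yields
\[
x_{i+2}\;\leq\;\frac{x_{i+1}}{1-cx_{i+1}}\;=\;f(x_{i+1})\;\leq\;f\!\left(\frac{x_0}{1-c(i+1)x_0}\right).
\]
The right-hand side simplifies algebraically:
\[
f\!\left(\frac{x_0}{1-c(i+1)x_0}\right)=\frac{\frac{x_0}{1-c(i+1)x_0}}{1-\frac{cx_0}{1-c(i+1)x_0}}=\frac{x_0}{\bigl(1-c(i+1)x_0\bigr)-cx_0}=\frac{x_0}{1-c(i+2)x_0},
\]
which is precisely \eqref{eq:recursive_relation_lemma} at index $i+1$. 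This closes the induction.

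There is no genuine obstacle here; the only thing to be careful about is that the bound $x_0\leq 1/(cN)$ is used precisely to guarantee that the denominators $1-c(j+1)x_0$ remain positive for all indices $j\leq N-1$ appearing in the induction, so that monotonicity of $f$ can legitimately be invoked at each step. Once this positivity is in hand, the algebraic identity above does all the work.
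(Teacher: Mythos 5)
Your proof is correct and follows essentially the same route as the paper: induction on $i$, monotonicity of the map $t\mapsto t/(1-ct)$ below $1/c$ (guaranteed by the hypothesis $x_0\leq 1/(cN)$ keeping all denominators positive), and the algebraic identity collapsing the composition to $x_0/(1-c(i+2)x_0)$. The paper phrases the monotonicity step by first bounding $x_i\leq 1/(c(N-i))<1/c$ and then multiplying out the fractions, but this is the same argument.
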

\begin{proof}
The statement clearly holds for $i=-1$. Suppose \eqref{eq:recursive_relation_lemma} holds for $0\leq i<N$. Since for every $C>0$ the map $x\mapsto \frac{x}{1-Cx}$ is monotonically increasing and continuous in $(-\infty,\frac{1}{C})$, we have in particular that
\[
x_i\leq\frac{x_0}{1-cix_0}\leq\frac{1}{cN}\frac{N}{N-i}=\frac{1}{c(N-i)}<\frac{1}{c},
\]
and therefore
\[
x_{i+1}\leq\frac{x_i}{1-cx_i}\leq\frac{x_0}{1-cix_0}\frac{1-cix_0}{1-c(i+1)x_0}=\frac{x_0}{1-c(i+1)x_0},
\]
completing the induction.
\end{proof}
We are now ready to prove uniform local-in-time existence for Systems \eqref{eq:discrete_p} and \eqref{eq:discrete_u}. To simplify the statement, we glue together the piecewise approximated solution, naturally defining
\[
\nabla p^{\tau,\epsilon}_t:=\nabla p^i_s,\,\text{ if }t=i\tau+s,
\]
and
\[
\nabla \psi^{\tau,\epsilon}_t:=\nabla\psi^i,\,\text{ if }t\in[i\tau,(i+1)\tau).
\]
\begin{theorem}\label{thm:discrete_existence}Let $\Om$ be a bounded subset of $\R^2$ with boundary fo class $C^{k+1}$ and $\nabla p_0\in H^k(\Om,\R^2)$ such that
\[
\calQ[D^2 p_0,\nabla p_0]\geq-\mu_0\I>-\I,
\]
for some $\mu_0<1$ and $k\geq 4$. Then, there exists a constant $C=C(\Om,V,\varphi,k)>0$ and $t^*>0$ such that for every $\tau=t^*/N$, $N\in\N$ big enough and $\epsilon>0$, there exists an approximate solution  
\[
\{\nabla p^i_s,\nabla \psi^i\}_{i=0}^{N-1}\in C^1([0,\tau],H^k(\Om,\R^2))\times H^k(\Om,\R^2),
\]
of Systems \eqref{eq:discrete_p} and \eqref{eq:discrete_u}, where $t^*$ can be taken equal to
\[
t^*:=C\Biggl(\frac{1-\mu_0}{\norm{\nabla p_0}_{H^k(\Om)}+1}\Biggr)^{(k+1)k+2}.
\]
In particular, for every $0<t'<t^*$, there exists $C'=C'(\Om,V,\varphi,k)>0$ such that
\[
\norm{\nabla p^{\tau,\epsilon}_t}_{H^k(\Om)},\norm{\nabla\psi^{\tau,\epsilon}_t}_{H^k(\Om)}\leq C',
\]
for all $t\in[0,t']$.
\end{theorem}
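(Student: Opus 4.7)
The strategy is to bootstrap the single-interval ODE bound of Lemma \ref{lem:THETA_dynamic} across the $N$ subintervals $[i\tau,(i+1)\tau]$, choosing $t^*$ so small that the concatenated estimate closes.

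First, I set up the induction. Suppose inductively that $\nabla p^{j}_s\in C^1([0,\tau],H^k(\Om,\R^2))$ and $\nabla\psi^j\in H^k(\Om,\R^2)$ have been constructed for $j=0,\dots,i$, and that $\mu^j_s<1$ throughout. Given $\nabla p^{i+1}_0:=\nabla p^{i}_\tau$, the coefficient matrix $\I+\calQ[\mol D^2p^{i+1}_0,\mol\nabla p^{i+1}_0]$ is uniformly elliptic with constant $1-\mu^{i+1}_0>0$ (which will be verified quantitatively below). Hence the linear elliptic problem \eqref{eq:discrete_u} admits a unique solution $\nabla\psi^{i+1}\in H^k(\Om,\R^2)$ by Proposition \ref{prop:global_estimates} applied in the form \eqref{eq:GLOBAL_EST2}, and then the Cauchy–Lipschitz result from the previous subsection supplies $\nabla p^{i+1}_s$ on $[0,\tau]$.

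Next, I carry out the per-interval ODE analysis. Integrating the differential inequality of Lemma \ref{lem:THETA_dynamic}, say $\frac{d}{ds}\tilde\Theta^{i+1}_s\leq C_0(\tilde\Theta^{i+1}_s)^2$, on $[0,\tau]$ gives
\[
\tilde\Theta^{i+1}_\tau\leq\frac{\tilde\Theta^{i+1}_0}{1-C_0\tau\,\tilde\Theta^{i+1}_0},
\]
valid whenever the denominator stays positive. Because the mollified pressure gradient depends continuously on the data and $\nabla p^{i+1}_0=\nabla p^i_\tau$ by construction, we have the matching identity $\Theta^{i+1}_0=\Theta^i_\tau\leq\tilde\Theta^i_\tau$. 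Setting $x_i:=\tilde\Theta^i_\tau$ (with $x_{-1}:=\Theta^0_0$) the above bound becomes the scalar recursion
\[
x_{i+1}\leq\frac{x_i}{1-C_0\tau\,x_i}.
\]

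Now I invoke the combinatorial Lemma \ref{lem:recursive_relation} with $c=C_0\tau$. Choose
\[
t^*:=\frac{1}{2C_0\Theta^0_0}=\frac{1}{2C_0}\Biggl(\frac{1-\mu_0}{\norm{\nabla p_0}_{H^k(\Om)}+1}\Biggr)^{(k+1)k+2},
\]
which has precisely the claimed form with $C=1/(2C_0)$. For any $N\in\N$ write $\tau=t^*/N$; then $x_{-1}=\Theta^0_0\leq 1/(C_0t^*)=1/(cN)$, so Lemma \ref{lem:recursive_relation} yields
\[
x_i\leq\frac{x_{-1}}{1-C_0\tau(i+1)x_{-1}}\leq\frac{\Theta^0_0}{1-C_0t^*\Theta^0_0}\leq 2\Theta^0_0,
\]
for all $i=0,\dots,N-1$. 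This uniform upper bound on $\tilde\Theta^i_s$ on each interval in turn provides a uniform lower bound on $1-\mu^{i+1}_s$ and a uniform upper bound on $\norm{\nabla p^{i+1}_s}_{H^k(\Om)}$, closing the induction (in particular ensuring $\mu^{i+1}_0<1$ at every step so that \eqref{eq:discrete_u} remains solvable). The corresponding bound on $\norm{\nabla\psi^{\tau,\epsilon}_t}_{H^k(\Om)}$ then follows immediately from the elliptic estimate \eqref{eq:U_EST} and Lemma \ref{lem:basic_est}.

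The expected main obstacle is the bookkeeping at the junction between subintervals: one must verify that $\Theta^{i+1}_0$ equals $\Theta^i_\tau$ (so that the per-step bound concatenates into a scalar recursion). This is not automatic because $\Theta$ is built from the mollified quantities and the infimum defining $\mu$. However, both are computed from the same $\mol\nabla p^{i}_\tau=\mol\nabla p^{i+1}_0$, so the identity holds at the nodes. The remaining ingredient is that the constant $C_0$ from Lemma \ref{lem:THETA_dynamic} is independent of $\epsilon$ and $\tau$ — a point already built into the statements of the energy estimate and the elliptic estimate, since neither depends on the mollification parameter.
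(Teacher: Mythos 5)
Your proposal is correct and follows essentially the same route as the paper: integrate the differential inequality of Lemma \ref{lem:THETA_dynamic} on each subinterval, concatenate via the node identity $\Theta^{i+1}_0=\Theta^i_\tau$ into the scalar recursion handled by Lemma \ref{lem:recursive_relation}, and choose $t^*\sim\bigl(C\,\Theta^0_0\bigr)^{-1}$, with the $\nabla\psi$ bound coming from \eqref{eq:U_EST}. The only (harmless) deviations are cosmetic: you track $\tilde\Theta^i_\tau$ instead of $\tilde\Theta^i_0$, insert a safety factor $2$ in $t^*$ so the uniform bound $2\Theta^0_0$ holds up to $t^*$ itself (the paper instead takes $t^*=1/(C\Theta^0_0)$ and gets bounds only on $[0,t']$, $t'<t^*$), and you make explicit the induction guaranteeing solvability of \eqref{eq:discrete_u} at each step, which the paper leaves implicit.
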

\begin{proof}Integrating for $s\in[0,\tau]$ Equation\eqref{eq:THETA_dynamics} of Lemma \ref{lem:THETA_dynamic} at time $i$, and recalling that $\tilde\Theta^{i+1}_0=\tilde\Theta^i_\tau$, we obtain the recursive relation
\[
\tilde\Theta_0^{i+1}\leq\frac{\tilde\Theta^i_0}{1-C\tau\tilde\Theta_0^i},
\]
which, applying Lemma \ref{lem:recursive_relation} gives the bound
\begin{equation}\label{eq:recursive_2}
\tilde\Theta_0^{i+1}\leq\frac{\tilde\Theta^0_0}{1-C\tau(i+1)\tilde\Theta_0^0},
\end{equation}
for every $i=\{-1,0,1,\dots,N-1\}$ provided
\[
\tilde\Theta^0_0=\Theta_0^0\leq\frac{1}{C\tau N},
\]
for some $N\in\N$. Hence, setting
\[
t^*:=\frac{1}{C\Theta^0_0},
\]
we ensure the local existence of an approximate solution in $[0,t^*)$ uniformly in $\epsilon>0$ and for every $\tau=t^*/N$, $N\in \N$ big enough. In particular, \eqref{eq:recursive_2} implies that for any interval of time $[0,t']$ with $t'<t^*$ the uniform bound
\[
\tilde\Theta_s^i\leq C',
\]
holds, where $C'>0$ can be taken such that
\[
t'=\frac{1}{C}\Bigl(\frac{1}{\Theta_0^0}-\frac{1}{C'}\Bigr).
\]
\end{proof}

\section{Compactness argument and proof of the main Theorem}\label{sec:compactness}

Fix any $0<t'<t^*$, and $N_0\in\N$ large. For every $N\geq N_0$ define
\[
\nabla p^N_t:=\nabla p_t^{t'/2^N,t'/2^N},
\]
and
\[
\nabla\psi^N_t:=\nabla\psi^{t'/2^N,t'/2^N}_t.
\]
Then, by Theorem \ref{thm:discrete_existence}, the sequence $\seq{\nabla p^N_t}_{N\geq N_0}$ is uniformly bounded in the space
\[
\mathcal{W}:=\Bigl\{\nabla q_t\in L^\infty(0,t';H^{k}(\Om,\R^2)),\text{ and }\partial_t\nabla q_t\in L^\infty(0,t';H^1(\Om,\R^2)))\Bigr\}.
\]
Since the embedding of $H^k(\Om)$ in $C^{k-2,\alpha}(\Om)$ is compact (see \cite[Chapter 6]{A08}) and $C^{k-2,\alpha}(\Om)$ embeds continuously in $H^1(\Om)$, by Aubin-Lions-Simons Lemma we have that
\[
\mathcal{W}\hookrightarrow C(0,t';C^{k-2,\alpha}(\Om,\R^2)),
\]
is compact as well. Extracting a converging sub-sequence we obtain (after relabelling) that
\[
\nabla p_t^N\to \nabla p_t\,\text{ in }C(0,t';C^{k-2,\alpha}(\Om,\R^2)),
\]
for  some $\nabla p_t\in C(0,t';C^{k-2,\alpha}(\Om,\R^2))$. Moreover, looking at $\seq{\nabla p_t^N}_{N\geq N_0}$ as bounded subset of the space $L^2(0,t';H^k(\Om,\R^2))$, we can affirm that
\[
\nabla p_t^N\rightharpoonup \nabla p_t\text { in }L^2(0,t';H^k(\Om,\R^2)).
\]
Let $\nabla \psi_t$ be solution of the System \eqref{eq:elliptic_u} associated to the limit $\nabla p_t$, i.e.
\begin{equation}
\begin{cases}
\diva(e^{-2\varphi}(\I+\calQ[D^2p_t,\nabla p_t])\nabla\psi_t)+\nabla^\perp(\calf[\nabla p_t])\cdot\nabla \psi_t=\diva(\calF[\nabla p_t]),&\text{ in }\Om\\
\psi_t=0,&\text{ on }\partial\Om.
\end{cases}
\end{equation}
Observe that the lower bound on the uniform elliptic constants $1-\mu^i_s$ proved in Theorem \ref{thm:discrete_existence} propagates to the limit, that we will denote with
\[
-\mu_t:=\inf\Bigl\{\inn{\calQ[D^2p_t,\nabla p_t](x)\xi,\xi}:\abs{\xi}=1,\,x\in\Om\Bigr\}.
\]
By qualitative elliptic regularity, we can affirm that $\nabla\psi_t\in C(0,t';C^{k-2,\alpha}(\Om,\R^2))$.
Fix $t\in(0,t')$ and let
\[
t_N:=\min\{jt'/2^N\geq t :j=0,\dots,N\},
\]
and observe that the difference $\psi_t-\psi_{t_N}^N=\psi_t-\psi_{t}^N$ solves the equation
\[
\begin{cases}
\diva(e^{-2\varphi}(\I+\calQ[D^2p_t,\nabla p_t])\nabla(\psi_t-\psi_t^N))+\nabla^\perp(\calf[\nabla p_t])\cdot\nabla (\psi_t-\psi_t^N)=\calX^N_t,&\text{ in }\Om\\
\psi_t-\psi_t^N=0,&\text{ on }\partial\Om,
\end{cases}
\]
where
\[
\calX^N_t:=\diva\bigl(\calF[\nabla p_t-\nabla p_t^N]\bigr)-\diva\bigl(e^{-2\varphi}\calQ[D^2 (p_t -p_t^N),\nabla (p_t-p_t^N)]\nabla\psi^N_t\bigr)-\nabla^\perp(\calf[\nabla(p_t-p_t^N)])\cdot\nabla\psi^N_t.
\]
We can argue as at the end of Proposition \ref{prop:global_estimates}, to estimate
\[
\norm{\nabla \psi_t-\nabla\psi^N_t}_{L^2(\Om)}\leq \frac{1}{1-\mu_t}\norm{\calX^N_t}_{L^2(\Om)}\to 0,
\]
uniformly in $(0,t')$ thanks to the bounds given by Theorem \ref{thm:discrete_existence}. Moreover, by weak compactness of $L^2(0,t';H^k(\Om,\R^2))$, we have that
\[
\nabla\psi^N_t\rightharpoonup\nabla\psi_t\in L^2(0,t';H^k(\Om,\R^2)).
\]
To summarise, we have the following proposition.
\begin{proposition}\label{prop:compactness}Up to taking a subsequence of $\seq{\nabla p_t^N,\nabla\psi^N_t}_{N\geq N_0}$ there exist
\[
\nabla p_t,\nabla\psi_t\in C(0,t';C^{k-2,\alpha}(\Om,\R^2))\cap L^2(0,t';H^k(\Om,\R^2)),
\]
such that
\[
\nabla p_t^N\to\nabla p_t,
\]
strongly in $C(0,t';C^{k-2,\alpha}(\Om,\R^2))$ and weakly in $L^2(0,t';H^k(\Om,\R^2))$, and
\[
\nabla \psi^N_t\to\nabla\psi_t,
\]
strongly in $L^\infty(0,t';C^{k-2,\alpha}(\Om,\R^2))$ and weakly in $L^2(0,t';H^k(\Om,\R^2))$.
\end{proposition}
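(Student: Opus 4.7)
The plan is to combine the uniform bounds of Theorem \ref{thm:discrete_existence} with an Aubin--Lions--Simon compactness argument applied to the pressure gradients, and then to transfer the resulting convergence to the velocity potentials via the linear elliptic problem \eqref{eq:elliptic_u}.

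For the first step, Theorem \ref{thm:discrete_existence} provides a uniform $L^\infty(0,t'; H^k(\Om,\R^2))$ bound on $\nabla p_t^N$. Differentiating \eqref{eq:discrete_p} in time and combining with Lemma \ref{lem:basic_est} and the uniform bound on $\nabla\psi^N$ yields a uniform $L^\infty(0,t'; H^1(\Om,\R^2))$ bound on $\partial_t\nabla p^N_t$. Since the embedding $H^k \hookrightarrow C^{k-2,\alpha}$ is compact and $C^{k-2,\alpha} \hookrightarrow H^1$ continuously, Aubin--Lions--Simon produces (up to a subsequence) strong convergence in $C(0,t'; C^{k-2,\alpha}(\Om,\R^2))$ to some limit $\nabla p_t$, while weak $L^2(0,t';H^k)$-convergence follows from the uniform $H^k$-bound.

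For the second step, I would define $\nabla\psi_t$ as the solution of \eqref{eq:elliptic_u} associated with the limit $\nabla p_t$. Strong $C^{k-2,\alpha}$-convergence of $\nabla p^N_t$ (with $k-2\geq 2$) yields uniform convergence of the coefficients $\I + \calQ[D^2 p^N,\nabla p^N]$, so the lower bound $1-\mu^N_s \geq c > 0$ on the ellipticity constant coming from Theorem \ref{thm:discrete_existence} passes to the limit, making $\nabla \psi_t$ well-defined and belonging to $C(0,t';C^{k-2,\alpha}(\Om,\R^2))$ by classical elliptic regularity. The difference $\psi_t - \psi^N_t$ then satisfies an elliptic equation with the limit coefficients and a right-hand side $\calX^N_t$ which is a sum of terms each linear in $\nabla(p_t - p^N_t)$ with uniformly bounded factors. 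Testing this equation against $\psi_t-\psi^N_t$ and mimicking the $L^2$-bound derived at the end of the proof of Proposition \ref{prop:rescaled_global_estimates} gives
\[
\norm{\nabla\psi_t - \nabla\psi^N_t}_{L^2(\Om)} \leq \frac{C}{1-\mu_t}\norm{\calX^N_t}_{L^2(\Om)} \longrightarrow 0,
\]
uniformly in $t \in [0,t')$. Interpolating this $L^2$-convergence against the uniform $H^k$-bound via Lemma \ref{thm:Adams} upgrades the convergence to strong convergence in $L^\infty(0,t'; C^{k-2,\alpha}(\Om,\R^2))$, and weak $L^2(0,t'; H^k)$-convergence of $\nabla\psi^N$ again follows from weak compactness.

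The only point requiring genuine care is to ensure that the limiting ellipticity constant $1-\mu_t$ stays bounded away from zero on the whole $[0,t')$, so that the difference equation is uniformly elliptic and the $L^2$-estimate applies with a uniform constant; this is a direct consequence of the uniform lower bound on $1-\mu^N$ provided by Theorem \ref{thm:discrete_existence} together with the uniform $C^0$-convergence of the coefficients. Apart from this check, the argument is a routine repackaging of strong/weak compactness and the continuous dependence of linear elliptic problems on their data.
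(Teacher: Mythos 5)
Your argument is essentially the paper's own proof: Aubin--Lions--Simon applied to the uniform $L^\infty(0,t';H^k)$ and $L^\infty(0,t';H^1)$ bounds on $\nabla p^N_t$ and $\partial_t\nabla p^N_t$ (the latter read off directly from the evolution equation \eqref{eq:discrete_p}), followed by defining $\nabla\psi_t$ through the limiting elliptic problem, noting that the ellipticity bound passes to the limit, and estimating $\nabla\psi_t-\nabla\psi^N_t$ in $L^2(\Om)$ via the difference equation with right-hand side $\calX^N_t$, exactly as in the paper. The only cosmetic point is that Lemma \ref{thm:Adams} interpolates integer Sobolev norms (so it literally yields strong $H^{k-1}$, hence $C^{k-3,\alpha}$, convergence), and the final upgrade to $L^\infty(0,t';C^{k-2,\alpha}(\Om,\R^2))$ is more cleanly obtained from an Ehrling-type inequality $\norm{\cdot}_{C^{k-2,\alpha}(\Om)}\leq\epsilon\norm{\cdot}_{H^k(\Om)}+C_\epsilon\norm{\cdot}_{L^2(\Om)}$ combined with the uniform $H^k$ bound --- a step the paper itself leaves implicit.
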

We are now ready to prove the main result of this paper.
\begin{proof}[Proof of Theorem \ref{thm_MAIN}] We have to show that our candidates $(\nabla p_t,\nabla\psi_t)$ form a solution of the semigeostrophic equation \eqref{eq:SGflat}. We first prove that $(\nabla p_t,\nabla\psi_t)$ is a weak solution, the conclusion follows from the additional regularity showed before. Let $\xi_t\in C^1_c([0,t'),C^\infty(\Om,\R^2))$ be any test function, denote with $\innn{\cdot,\cdot}$  the standard inner product of $L^2(0,t';L^2(\Om,\R^2))$ and with $\molN:=\mathfrak{I}_{j'/N}$. Then, testing \eqref{eq:discrete_p} against $\xi_t$ we have that
\[
0=\innn{\nabla p_t^N,\partial_t\xi_t}-\innn{\nabla p_0,\xi_0}+\innn{\hel\molN\xi_t,e^{2V}\Bigl(\molN D^2p_t^N+\calB[\molN\nabla p^N_t]+e^{-2\varphi-2V}\I\Bigr)\nabla^\perp\psi^N_t+e^{-\varphi}\molN\nabla^\perp p^N_t}.
\]
Then, we can write
\begin{align*}
\innn{\nabla p_t,\partial_t\xi_t}&-\innn{\nabla p_0,\xi_0}+\innn{\hel\xi_t,e^{2V}\Bigl(D^2p_t+\calB[\nabla p_t]+e^{-2\varphi-2V}\I\Bigr)\nabla^\perp\psi_t+e^{-\varphi}\nabla^\perp p_t}=\\
&\innn{\nabla(p_t-p_t^N),\partial_t\xi_t}+\innn{\hel\xi_t-\molN\hel\xi_t,e^{2V}\Bigl(D^2p_t+\calB[\nabla p_t]+e^{-2\varphi-2V}\I\Bigr)\nabla^\perp\psi_t+e^{-\varphi}\nabla^\perp p_t}\\
&\quad+\innn{\hel\molN\xi_t,e^{2V}\Bigl(D^2(p_t-\molN p^N_t)+\calB[\nabla (p_t-\molN p_t)]\Bigr)\nabla^\perp\psi_t+e^{-\varphi}\nabla^\perp (p_t-\molN p_t^N)}\\
&\quad+\innn{\hel\molN\xi_t,e^{2V}\Bigl(\molN D^2p_t^N+\calB[\molN\nabla p^N_t]+e^{-2\varphi-2V}\I\Bigr)\nabla^\perp(\psi^N_t-\psi_t)},
\end{align*}
which goes to zero as $N$ goes to $+\infty$, thanks to the uniforms bounds of Theorem \ref{thm:discrete_existence} and Proposition \ref{prop:compactness}. Therefore, we have that $(\nabla p_t,\nabla\psi_t)$ solves weakly
\[
\partial_t\nabla p_t=\hel\Bigl(e^{2V}(D^2p_t+\calB[\nabla p_t]+e^{-2\varphi-2V}\I)\nabla^\perp\psi_t+e^{-\varphi}\nabla^\perp p_t\Bigr)=:\hel(X_t).
\]
We now take advantage of the elliptic equation solved by $\psi_t$ in order to get rid of the Hodge-Helmholtz decomposition in the right-hand side. Here is  the only point in the proof where we need to assume $\Om$ simply connected (see Remark \ref{rmk:T^2} for the periodic case $\Om=\R^2/\Z^2=\T^2$). The orthogonal complementary of $\hel(\cdot)$
\[
w_t:=X_t-\hel(X_t),
\]
is tangent to $\partial\Om$ and divergence free by construction of $\hel(X_t)$. Moreover, since
\[
\curl(w_t)=-\diva(X^\perp)=-\diva\bigl(e^{-2\varphi}(\I+\calQ[D^2p_t,\nabla p_t])\nabla\psi_t+\nabla^\perp(\calf[\nabla p_t])\cdot\nabla\psi-\calF[\nabla p_t]\bigr)=0,
\]
by construction of $\nabla\psi_t$, we conclude that $w_t$ in an harmonic vector field, and hence equal to zero since $\Om$ is simply connected. Therefore, $X_t=\hel(X_t)$.
\end{proof}

\begin{remark}\label{rmk:T^2}With some minor adjustments, it is possible to include the not simply connected flat periodic case $\Om=\T^2=\R^2/\Z^2$, $V=\varphi=0$. We have to substitute in Equation \eqref{eq:elliptic_u} the boundary condition $\psi=0$ on $\partial\Om$ with $\int_{\T^2}\psi\,dx=0$, and impose periodicity conditions on $\psi^{i+1}_s$, $p_0$ and $p^{i+1}_s$. We need also to adjust the operator $\hel(X)=\nabla q$, defined now to be the inverse operator of the problem
\[
\begin{cases}
\Delta q=\diva(X),\\
\int_{\T^2}q\,dx=0.
\end{cases}
\]
Existence of an uniform regularized solution that converges on $[0,t']$ to $\seq{\nabla p_t,\nabla\psi_t}$ still holds. The only problem to fix is that there exist non-trivial harmonic fields on $\T^2$. However, they do not play any role in our problem, and this can be showed with a direct computation: recall that we are in the situation
\[
\partial_t\nabla p_t=\hel\Bigl((D^2p_t+\I)\cdot\nabla^\perp\psi_t+\nabla^\perp p_t\Bigr)=\hel(X_t),
\]
and we want to get rid of $\hel$. Write
\[
X_t=\hel(X_t)+w_t=\nabla q_t+w_t,
\]
where $\diva(w_t)=\curl(w_t)=0$ thanks to the construction of $\nabla\psi_t$ and $\hel(X)$. Therefore, by duality we can see $w_t$ as an element of the de Rahm Cohomology $H^1_{dR}(\T^2)\cong\R^2$, which is generated by the two covector fields $dx^1$ and $dx^2$, which are closed but not exact since $x\mapsto x^1$ and $x\mapsto x^2$ are not periodic functions. Hence, there exist $\alpha^1_t,\alpha^2_t\in \R$ such that
\[
w_t=\alpha_t^1\frac{\partial}{\partial x^1}+\alpha_t^2\frac{\partial}{\partial x^2}.
\]
Now, choose $k\in\{1,2\}$, and observe that
\[
\int_{\T^2}\inn{X_t,\frac{\partial}{\partial x^k}}\,dx=\int_{\T^2}\inn{\nabla q_t+w_t,\frac{\partial}{\partial x^k}}\,dx=\alpha^k_t,
\]
Hence, taking advantage of the explicit form of $X_t$ and integrating by parts we conclude that
\begin{align*}
\alpha^k_t&=\int_{\T^2}\inn{(D^2p_t+\I)\cdot\nabla^\perp\psi_t+\nabla^\perp p_t,\frac{\partial}{\partial x^k}}\,dx=\int_{\T^2}\inn{D^2p_t\cdot\nabla^\perp\psi_t,\frac{\partial}{\partial x^k}}\,dx=\int_{\T^2}\inn{\nabla(\partial_k p_t),\nabla^\perp\psi_t}\,dx\\
&=\int_{\T^2}\diva(\partial_k p_t\nabla^\perp\psi_t)+\partial_k p_t\cdot\diva(\nabla^\perp\psi_t)\,dx=0.
\end{align*}
This shows $\hel(X_t)=X_t$ as wished.
\end{remark}

\bigskip
\printbibliography

@misc{ACPF12,
      title={Existence of Eulerian solutions to the semigeostrophic equations in physical space: the 2-dimensional periodic case}, 
      author={Luigi Ambrosio and Maria Colombo and Guido De Philippis and Alessio Figalli},
      year={2012},
      eprint={1111.7202},
      archivePrefix={arXiv},
      primaryClass={math.AP}
}

@article{BB71,
 ISSN = {00361399},
 URL = {http://www.jstor.org/stable/118357},
 abstract = {Hoskins's semigeostrophic equations are reformulated as a coupled Monge-Ampère/transport problem [B. J. Hoskins, Quart. J. Royal Met. Soc., 97 (1971), pp. 139-153]. Existence of global weak solutions is obtained for this formulation.},
 author = {J.-D. Benamou and Y. Brenier},
 journal = {SIAM Journal on Applied Mathematics},
 number = {5},
 pages = {1450--1461},
 publisher = {Society for Industrial and Applied Mathematics},
 title = {Weak Existence for the Semigeostrophic Equations Formulated as a Coupled Monge-Ampère/Transport Problem},
 volume = {58},
 year = {1998}
}

@book{MB01, place={Cambridge}, series={Cambridge Texts in Applied Mathematics}, title={Vorticity and Incompressible Flow}, DOI={10.1017/CBO9780511613203}, publisher={Cambridge University Press}, author={Majda, Andrew J. and Bertozzi, Andrea L.}, year={2001}, collection={Cambridge Texts in Applied Mathematics}}

@book{C06,
author = {Cullen, Michael J.P.},
copyright = {LOC 20111024 Droits réservés},
isbn = {186094518X},
keywords = {Circulation atmosphérique},
language = {eng},
address = {London},
publisher = {Imperial College Press},
title = {A mathematical theory of large-scale atmosphere/ocean flow},
year = {2006},
}

@article{FP12,
author = {Philippis, Guido and Figalli, Alessio},
year = {2012},
month = {02},
pages = {},
title = {Second order stability for the Monge-Ampere equation and strong Sobolev
convergence of optimal transport maps},
volume = {6},
journal = {Analysis and PDE},
doi = {10.2140/apde.2013.6.993}
}

@book{E15,
abstract = {It offers a comprehensive survey of modern techniques in the theoretical study of Partial differential equations (PDE) with particular emphasis on nonlinear equations.},
author = {Evans, Lawrence C.},
edition = {2nd ed., repr. with corr.},
isbn = {9780821849743},
language = {eng},
address = {Providence, R.I},
publisher = {American Mathematical Society},
series = {Graduate studies in mathematics vol. 19, ed. 2, repr.},
title = {Partial differential equations},
year = {2015},
}

@article {H75,
      author = "Brian J.  Hoskins",
      title = "The Geostrophic Momentum Approximation and the Semi-Geostrophic Equations",
      journal = "Journal of Atmospheric Sciences",
      year = "1975",
      publisher = "American Meteorological Society",
      address = "Boston MA, USA",
      volume = "32",
      number = "2",
      pages=      "233 - 242"
}

@article{L06,
author = {Loeper, Grégoire},
year = {2006},
month = {01},
pages = {795-823},
title = {A Fully Nonlinear Version of the Incompressible Euler Equations: The Semigeostrophic System},
volume = {38},
journal = {SIAM J. Math. Analysis},
}

@book{A08,
abstract = {Presents an introduction to the theory of Sobolev Spaces and other related spaces of function, as well as to the imbedding characteristics of these spaces. This theory is widely used in pure and Applied Mathematics and in the Physical Sciences.},
author = {Adams, Robert A.},
edition = {2nd ed., [Repr.]},
isbn = {9780120441433},
keywords = {ORLICZ-RÄUME (FUNKTIONALANALYSIS)},
language = {eng},
address = {Amsterdam [etc},
publisher = {Academic Press},
series = {Pure and applied mathematics series 140, repr. 2008},
title = {Sobolev spaces},
year = {2008},
}

@book{A18,
author = {Ambrosio, Luigi},
isbn = {9788876426506},
keywords = {Differential equations, Elliptic},
language = {eng},
address = {Pisa},
publisher = {Edizioni della Normale},
series = {Appunti Lecture Notes 18},
title = {Lectures on elliptic partial differential equations},
year = {2018},
}

\end{document}